\theoremstyle{definition}
\newtheorem{theorem}{Theorem}[section]
\newtheorem*{theo*}{Theorem}
\newtheorem{definition}[theorem]{Definition}
\newtheorem{remark}[theorem]{Remark}
\newtheorem{example}[theorem]{Example}
\newtheorem{lemma}[theorem]{Lemma}
\newtheorem{proposition}[theorem]{Proposition}
\newtheorem{corollary}[theorem]{Corollary}
\newtheorem{conjecture}[theorem]{Conjecture}
\newcommand{\Z}{\mathbb{Z}}
\newcommand{\N}{\mathbb{N}}
\newcommand{\Q}{\mathbb{Q}}
\newcommand{\C}{\mathbb{C}}
\newcommand{\PP}{\mathbb{P}} 
\newcommand{\F}{\mathrm{F}} 
\newcommand{\SG}{\mathfrak{S}} 
\newcommand{\SL}{SL_2(\Q)} 
\newcommand{\size}{l} 
\DeclareMathOperator{\Aut}{Aut} 
\DeclareMathOperator{\conf}{Conf}
\DeclareMathOperator{\Ext}{Ext} 
\DeclareMathOperator{\gr}{gr}
\DeclareMathOperator{\rk}{rk}
\DeclareMathOperator{\Hom}{Hom} 
\DeclareMathOperator{\Ind}{Ind} 
\DeclareMathOperator{\im}{Im} 
\DeclareMathOperator{\dd}{d} 
\newcommand{\mylabel}[2]{#2\def\@currentlabel{#2}\label{#1}}
\newcommand*{\bigcdot}{%
  {\mathbin{\mathpalette\bigcdot@{}}}%
}
\newcommand*{\bigcdot@scalefactor}{.75}
\newcommand*{\bigcdot@widthfactor}{1.4}
\newcommand*{\bigcdot@}[2]{%
  \sbox0{$#1\vcenter{}$}
  \sbox2{$#1\cdot\m@th$}%
  \hbox to \bigcdot@widthfactor\wd2{%
    \hfil
    \raise\ht0\hbox{%
      \scalebox{\bigcdot@scalefactor}{%
        \lower\ht0\hbox{$#1\bullet\m@th$}%
      }%
    }%
    \hfil
  }%
}
\g@addto@macro{\UrlBreaks}{\UrlOrds}
\g@addto@macro{\UrlBreaks}{%
\do\/\do\d%
}
\begin{document}

\title[Ordered configuration spaces on an elliptic curve]{Asymptotic growth of Betti numbers of ordered configuration spaces on an elliptic curve}

\author[R. Pagaria]{Roberto Pagaria}
\thanks{The author is supported by PRIN 2017YRA3LK}
\address{Roberto Pagaria \newline \textup{Università di Bologna, Dipartimento di Matematica}\\ Piazza di Porta San Donato 5 - 40126 Bologna\\ Italy.}
\email{roberto.pagaria@unibo.it}

\begin{abstract}
We construct a dga to computing the cohomology of ordered configuration spaces on an algebraic variety with vanishing Euler characteristic. 
It follows that the $k$-th Betti number of $\conf(C,n)$ ($C$ is an elliptic curve) grows as a polynomial of degree exactly $2k-2$.
We also compute $H^k(\conf(C,n))$ for $k\leq 5$ and arbitrary $n$.
\end{abstract}

\maketitle

The ordered configuration space of $n$ points on a smooth projective variety $X$ is
\[ \conf(X,n) = \{ (p_1, \dots, p_n) \in X^n \mid p_i \neq p_j\}.\]
A central problem in the theory of configuration spaces is understand the cohomology of these topological spaces.
The main tool is the Kri\v{z} model, introduced at the same time by Kri\v{z} \cite{Kriz94} and by Totaro \cite{Totaro}, it is a differential graded algebra $E(X,n)$ that codifies the rational homotopy type of $\conf(X,n)$ (see \Cref{sect:1}).
Often is useful to study all these configuration spaces $\conf(X,n)$ for $n\in \N$ all together.
In the case of vanishing Euler characteristic $\chi(X)=0$, we construct a filtration $F_\bigcdot$ of the Kri\v{z} model $E(X,n)$ and we prove that the differential is strict with respect to this filtration (\Cref{thm:grad_cohomology}).
Passing to the graded model we obtain a simpler differential graded algebra that is more feasible for computing cohomology.
The mixed Hodge numbers of $\conf(X,n)$ (indeed any polynomial function $\N \to \N$) can be written as linear combination of binomials with coefficient in $\Z$.
We prove in \Cref{prop:coeff_binom} that these coefficients are nonnegative integers for any algebraic variety with $\chi(X)=0$, this is not true in a wider generality. 

We apply the simpler dga to configuration on an elliptic curve $C$.
It was known from \cite{Church12} that the $k$-th Betti number of $\conf(C,n)$ grows as polynomial of degree at most $2k$.
We improve that result by showing that the Betti numbers grow as a polynomial of degree exactly $2k-2$ (\Cref{cor:Betti}).
We developed a method to give lower bounds for the mixed Hodge numbers (and so for Betti numbers) involving certain partitions, see \Cref{lemma:oyster}.
The lower bounds are extremely good for understand the asymptotic behaviour of these numbers.

The upper bound for these number is more difficult to determine, so we developed an algorithm for computing the (not so) small cases. The algorithm uses in a essential way the aforementioned filtration of the Kri\v{z} model.
We finally present the mixed Hodge numbers of $\conf(C,n)$ for $n\leq 7$ (\Cref{appendix}).
Using the theory development here and the computation of small cases, we also determine the $\dim H^k(\conf(C,n);\Q)$ for $k\leq 5$ and arbitrary $n\in \N$ (\Cref{cor:Betti}).

\medskip
A recent tool to deal with sequence of homogeneous objects is the representation stability introduced by Church and Farb \cite{CF13}.
We use a variant of that: we substitute the category of finite sets with injections with the category $\F$ of finite sets with all maps.
The representation theory of $\F$ is developed by \cite{WG14}, \cite{SS17}, and \cite{Ryba}. 
Ellenberg and Wiltshire-Gordon \cite{EWG15} have shown that the spaces $\conf(X,n)$ are an $\F$-module if $X$ has two linearly independent vector fields.
Their result implies that the cohomology is an $\F$-module, we give a short proof that the Kri\v{z} model is an $\F$-module.
We use this structure in an essential way to define the filtration $F$ and to prove that is strictly compatible with the differential.

\section{Representation theory of the Kri\v{z} model}
\label{sect:1}
\subsection*{The Kri\v{z} model}
Let $X$ be a smooth projective variety.
For each element $x\in H^\bigcdot(X)$ we denote by $x_i \in H^\bigcdot (X^n)$ its image under the map $H^\bigcdot (X) \hookrightarrow H^\bigcdot (X^n)$ induced by the projection $X^n \to X$ on the $i$-th factor.

The class of the diagonal $\Delta \in H^{2\dim_\C X}(X \times X)$ is the cohomological class of the subvariety $\{(x,x) \mid x \in X\}$ in $X \times X$.
If we fix a graded basis $\{b_1, \dots, b_k\}$ of $H^\bigcdot(X)$ and consider the dual basis $\{b_1^*, \dots, b_k^*\}$ with respect to the cup product, then
\begin{equation} \label{eq:diag_formula}
\Delta = \sum_{j=1}^k (-1)^{\deg b_j^*} b_j \otimes b_j^*
\end{equation}
Consider also $\Delta_{i,j} \in H^\bigcdot (X^n)$ as the pullback of the diagonal $\Delta$ with respect to the projection $X^n \to X^2$ on the $i$-th and $j$-th factors.

\begin{definition}
Let $E(X,n)$ be the differential bigraded algebra 
\[ \faktor{H\bigcdot(X^n)[\{G_{i,j}\}_{i<j}]}{I},\]
where $\deg(x_i)=(\deg x, 0)$, $\deg(G_{i,j})=(0, 2\dim_\C X -1)$, and $I$ is the ideal generated  by
\begin{align*}
&G_{i,j}(x_i -x_j) &&\textnormal{for } i<j \textnormal{ and } x \in H^\bigcdot(X) \\
& G_{i,j}G_{j,k}-G_{i,j}G_{i,k}+ G_{j,k}G_{i,k} && \textnormal{for } i<j<k.
\end{align*}
The differential is defined by $\dd(x_i)=0$ for all $x$ and $i$ and $\dd(G_{i,j})=\Delta_{i,j}$.
\end{definition}

For the sake of notation, we define for $i>j$ $G_{i,j}=G_{j,i}$.
The following theorem was proven by Kri\v{z} \cite{Kriz94} and Totaro \cite{Totaro} independently

\begin{theorem}
The dga $E(X,n)$ is a rational model for $\conf(X,n)$.
Therefore $H^\bigcdot (\conf(X,n)) \cong H^{\bigcdot} (E(X,n),\dd)$.
\end{theorem}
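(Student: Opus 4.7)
The plan is to follow Totaro's approach via the Leray spectral sequence of the open embedding $j \colon \conf(X,n) \hookrightarrow X^n$. I would first identify the higher direct images $R^q j_\ast \Q$ using the local structure of $X^n$ along the diagonals: each $\Delta_{i,j}$ is a smooth subvariety of complex codimension $d = \dim_\C X$, and contributes, via the Thom isomorphism and local cohomology, a generator $G_{i,j}$ of bidegree $(0, 2d-1)$ to the $E_2$-page. Tracking cup products in local cohomology produces an identification of $E_2$ as a bigraded algebra with the underlying bigraded algebra of $E(X,n)$: the relation $G_{i,j}(x_i - x_j) = 0$ records that on $\Delta_{i,j}$ the two pullbacks of a class from $X$ agree, and the Arnold relation records a triple-intersection compatibility. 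The first possibly nonzero differential is the transgression $d_{2d}$, and by the Thom-Gysin construction its value on $G_{i,j}$ is the class $[\Delta_{i,j}] \in H^{2d}(X^n)$, matching exactly the differential of $E(X,n)$.

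The main obstacle is proving degeneration at $E_{2d+1}$, i.e.\ the vanishing of all higher differentials. This is where the smooth projective hypothesis is essential: by Deligne's theory of weights, classes in $H^\bigcdot(X^n)$ and classes arising from cohomology of smooth projective subvarieties via Gysin maps are pure of explicit weight. A higher differential $d_r$ with $r > 2d$ would have to be a morphism of mixed Hodge structures between pieces of strictly mismatched weights, hence must vanish. This weight argument is the heart of the theorem and also explains why smoothness and projectivity cannot be dispensed with at this level of generality.

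Finally, to upgrade the cohomology isomorphism into a statement about rational homotopy type, I would exhibit a zig-zag of quasi-isomorphisms connecting $E(X,n)$ with a Sullivan model of $\conf(X,n)$. One convenient route, taken by Kri\v{z}, is to realize $E(X,n)$ as quasi-isomorphic to a cdga constructed from the Fulton-MacPherson compactification of $\conf(X,n)$, where the same weight degeneration argument produces the desired quasi-isomorphism at the level of dgas. This yields the stronger conclusion that $E(X,n)$ is a rational model, not merely that its cohomology computes $H^\bigcdot(\conf(X,n);\Q)$.
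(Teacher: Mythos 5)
The paper offers no proof of this theorem: it is stated and attributed to Kri\v{z} and Totaro, and the reader is referred to their papers. So there is no ``paper's own proof'' to compare against; the relevant question is whether your sketch faithfully reproduces the arguments of the cited references, and it does.

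Your outline correctly separates the two claims and assigns each to the right source. The cohomology isomorphism $H^\bigcdot(\conf(X,n);\Q)\cong H^\bigcdot(E(X,n),\dd)$ is Totaro's contribution, and your sketch matches his argument: the Leray spectral sequence of $j\colon\conf(X,n)\hookrightarrow X^n$ has $E_2$-page concentrated in rows $q$ that are multiples of $2d-1$ (where $d=\dim_\C X$), so the first possible nonzero differential is $d_{2d}$; the identification of $E_2$ with $E(X,n)$ as a bigraded algebra, with $G_{i,j}$ generating $R^{2d-1}j_*\Q$ near $\Delta_{i,j}$, is via local cohomology/Thom--Gysin; and the Deligne weight argument (a class in $E_2^{p,k(2d-1)}$ has weight $p+2dk$, because $G_{i,j}$ is Tate of weight $2d$, and one checks that only $d_{2d}$ respects weights) gives degeneration at $E_{2d+1}$. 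The rational model statement --- that $E(X,n)$ captures the full rational homotopy type and not merely the cohomology ring --- is Kri\v{z}'s strictly stronger result, and your route through the Fulton--MacPherson compactification $X[n]$ (smooth projective, with $\conf(X,n)$ the complement of a normal crossings divisor, so Morgan's mixed Hodge theory applies to produce a zig-zag of quasi-isomorphisms to $E(X,n)$) is the correct one. The only thing I would tighten is the glib phrase ``tracking cup products in local cohomology produces an identification'': the Arnold relation in particular requires a genuine computation in the cohomology of the diagonal arrangement, not just an intersection-theoretic heuristic, though of course that computation is done in the references. Since the paper simply defers to Kri\v{z} and Totaro, your sketch is an adequate stand-in for the omitted proof.
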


\subsection*{The \texorpdfstring{$\F$}{F} module structure}
We will consider the spaces $\conf(X,n)$ and their cohomology groups for $n \in \N$ all together.
Consider the \textit{category of finite sets} $\F$ whose objects are the sets $[n] = \{1,2, \dots, n\}$ for all $n \in \N$ and with morphisms all the maps of sets between them.
Notice that the automorphism of the object $[n]$ $\Aut_{\F}([n]) \cong \SG_n$ is the symmetric group.


An $\F$-\textit{representation} over $\mathcal{V}$ is a functor $V$ from the category $\F$ to the category $\mathcal{V}$.
A $\F$-\textit{module} $V$ is \textit{finitely generated} if there exists elements $v_i \in V([n_i])$ for $i$ in a finite set $I$ such that no proper submodule of $V$ contains all $v_i$ for $i \in I$.
A morphism $\tau \colon V \to W$ of $\F$-\textit{module} is a natural transformation between the functors $V$ and $W$.

\begin{definition}
The $\F$-module $E(X)$ is the functor defined by $[n] \mapsto E(X,n)$ on the objects and for each map $f\colon [n] \to [m]$ associate the morphism $f_* \colon E(X,n) \to E(X,m)$ defined by
\[ x_i \mapsto x_{f(i)}, \qquad G_{i,j} \mapsto \begin{cases} G_{f(i),f(j)} & \textnormal{if } f(i) \neq f(j) \\
0 & \textnormal{otherwise} 
\end{cases} \]
\end{definition}
The submodules $E^{p,q}(X)$ are finitely generated and $\dd \colon E(X) \to E(X)$ commutes with $f_*$ for all injective maps $f$. 

\begin{proposition}
If the Euler characteristic of $X$ is zero, i.e.\ $\chi(X)=0$. Then $\dd$ is a morphism of $\F$-modules.
\end{proposition}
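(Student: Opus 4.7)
The plan is to verify the identity $\dd \circ f_* = f_* \circ \dd$ for every map $f \colon [n] \to [m]$ of $\F$; the injective case is already recorded in the text, so the content is to cover maps with $f(i) = f(j)$ for some $i \neq j$. Both $\dd \circ f_*$ and $f_* \circ \dd$ are graded derivations from $E(X,n)$ into $E(X,m)$ regarded as an $E(X,n)$-bimodule via the algebra homomorphism $f_*$, so I would reduce at once to checking the identity on the algebra generators $x_i$ and $G_{i,j}$, since a derivation vanishes once it vanishes on a generating set.

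For the generators $x_i$ the identity is automatic because $\dd x_i = 0$ and $\dd x_{f(i)} = 0$. For $G_{i,j}$ with $f(i) \neq f(j)$, expanding $\Delta_{i,j}$ via \eqref{eq:diag_formula} yields
\[ f_*(\Delta_{i,j}) = \sum_{r} (-1)^{\deg b_r^*} (b_r)_{f(i)} (b_r^*)_{f(j)} = \Delta_{f(i), f(j)} = \dd(G_{f(i), f(j)}) = \dd(f_* G_{i,j}), \]
so the identity holds on this piece without invoking the Euler-characteristic hypothesis.

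The decisive case is $f(i) = f(j) =: \ell$: here $f_* G_{i,j} = 0$ by definition, hence $\dd(f_* G_{i,j}) = 0$, and what remains is to prove $f_*(\Delta_{i,j}) = 0$. Applying \eqref{eq:diag_formula} once more,
\[ f_*(\Delta_{i,j}) = \Bigl( \sum_{r} (-1)^{\deg b_r^*} b_r \cup b_r^* \Bigr)_\ell. \]
The class inside the parentheses is the pullback of the diagonal $\Delta$ along the diagonal embedding $X \hookrightarrow X \times X$, i.e.\ the Euler class $e(TX) \in H^{2 \dim_\C X}(X)$, whose integral over $X$ equals $\chi(X)$. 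Under the hypothesis $\chi(X) = 0$ and the standing assumption that $X$ is connected, the isomorphism $H^{2\dim_\C X}(X;\Q) \cong \Q$ forces $e(TX) = 0$, and therefore $f_*(\Delta_{i,j}) = 0$. I expect the main obstacle to be precisely this Euler-class identification; once it is in place, the remaining verifications (well-definedness of $f_*$ on the Kri\v{z} relations, compatibility with Leibniz) are formal bookkeeping that follows directly from the definitions.
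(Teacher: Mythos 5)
Your proof is correct and essentially matches the paper's: both reduce to showing $f_*(\Delta_{i,j})=0$ when $f(i)=f(j)$, and both identify the obstructing class in $H^{2\dim_\C X}(X)$ with $\chi(X)$ times the top class. The paper does this by a direct dual-basis computation $\sum_l (-1)^{\deg b_l^*}(b_l)_{f(i)}(b_l^*)_{f(i)} = \chi(X)[X]_{f(i)}$, whereas you invoke the identification of $\Delta^*\Delta$ with the Euler class $e(TX)$; these are the same fact phrased differently, and your preliminary reduction to generators via the derivation property is what the paper summarizes as ``the other conditions are trivial.''
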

\begin{proof}
We verify that $f(i)=f(j)$ implies $f_*(\dd(G_{i,j}))=0$, the other conditions are trivial.
Let $\{b_1, \dots, b_k\}$ be a graded basis of $H^\bigcdot(X)$ and $\{b_1^*, \dots, b_k^*\}$ be the dual basis, we use \Cref{eq:diag_formula}:
\begin{align*}
f_*(\dd(G_{i,j})) &= f_* \left( \sum_{l=1}^k (-1)^{\deg b_l^*} (b_l)_i  (b_l^*)_j \right) \\
&= \sum_{l=1}^k (-1)^{\deg b_l^*} (b_l)_{f(i)}  (b_l^*)_{f(i)} \\
&= \sum_{l=1}^k (-1)^{\deg b_l^*} [X]_{f(i)} \\
&= \sum_{r=1}^{2\dim(X)} (-1)^r h_r(X) [X]_{f(i)}\\
&= \chi(X) [X]_{f(i)},
\end{align*}
where $h_r(X)$ is the $r$-th Betti number of $X$.
The hypothesis $\chi(X)=0$ completes the proof.
\end{proof}

A topological interpretation of the above result can be found in \cite{EWG15}.

\subsection*{Representation theory of \texorpdfstring{$\F$}{F}}
The irreducible representations of the symmetric group $\SG_n$ are parametrize by the partitions $\lambda \vdash n$ of the number $n$, we denote them by $V_\lambda$.
The category $\mathcal{V}^\F$ of the $\F$ representation over $\mathcal{V}$ is not semisimple, however has the Jordan H\"{o}lder property as shown in \cite[Corollary 5.4]{WG14}.
The \textit{Schur projective} representation of type $\lambda \vdash k$ is the functor
\[ \PP_\lambda ([n])= \mathbb{S}_\lambda(\Q^n),\]
where $\mathbb{S}_\lambda$ is the Schur functor (see \cite{Weyman,Macdonald} for an introductory exposition of Schur functors).
The dimension $\dim \PP_\lambda([n])$ is an evaluation of the Schur symmetric polynomial, i.e.\ $s_\lambda(1^n)$.
These projective representations have the following property
\begin{equation}
\label{eq:scalar_P}
\Hom_{\mathcal{V}^\F}(\PP_\lambda, V) \cong \Hom_{\SG_k}(V_\lambda, V([k])),
\end{equation}
where $\lambda \vdash k$.

The simple representations are classified by Wiltshire-Gordon and they are of two kinds: $D_k$ for $k \in \N$ and $C_\lambda$ for $\lambda \vdash k$ with $\lambda_1>1$.
The representation $C_\lambda$ for $\lambda \vdash k$ is defined by $C_\lambda([n])=0$ for $n<k$ and by
\[ C_\lambda([n])= \Ind_{\SG_k \times \SG_{n-k}}^{\SG_n} V_\lambda \boxtimes 1_{n-k},\]
where $1_{n}$ is the trivial representation of $\SG_n$ and the dimension is $\binom{n}{k} \langle s_\lambda, p_{1^k}\rangle$.
The representation $D_0$ is defined by $D_0([0])=\Q$ and $D_0([n])=0$ for all $n>0$.
Finally, $D_k$ is defined by $D_k([n])=0$ for $n<k$ and by
\[ D_k([n])= V_{(n-k+1, 1^{k-1})}. \]
Moreover $D_k([n])$ has dimension $\binom{n-1}{k-1}$ for $n,k>0$.

\begin{definition}
A finitely generated representation $V$ is of degree $k$ if there exists a surjection $\oplus_i \PP_{\lambda_i} \twoheadrightarrow V$ with $\lambda_i \vdash n_i$ and $n_i\leq k$.
\end{definition}

Since there exist surjections $\PP_\lambda \twoheadrightarrow C_\lambda$ and $\PP_{1^k} \twoheadrightarrow D_k$, $D_k$ has degree $k$ and $C_\lambda$ has degree $|  \lambda |$.

\begin{proposition}[{\cite[Theorem 6.39]{WG14}} ] \label{prop:resolutions}
Every finitely generated representation $V$ of degree $k$ has a finite resolution 
\[ 0 \to \PP_0 \oplus \mathbb{D}_0 \to  \dots \to \PP_{k-1} \oplus \mathbb{D}_{k-1} \to \PP_k \to V \to 0,\]
where $\PP_i$ is a direct sum of Schur projective of pure degree $\leq i$ and $\mathbb{D}_i$ is direct sum of some copies of $D_{i+1}$ and $D_{i+2}$.
Moreover, $D_k$ has an infinite projective resolution given by
\[ \dots \to \PP_{1^{k+2}} \to \PP_{1^{k+1}} \to \PP_{1^{k}} \to D_k \to 0.\]
\end{proposition}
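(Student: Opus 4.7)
My plan is to treat the two assertions separately, handling the infinite resolution of $D_k$ first, since it feeds naturally into the degree-reduction argument needed for the finite resolution. For fixed $k\geq 1$ and $n\geq k$, I would decompose the permutation representation as $\Q^n = V_{(n)}\oplus V_{(n-1,1)}$ and expand the exterior power:
\[
\PP_{1^k}([n])=\Lambda^k \Q^n \cong \Lambda^k V_{(n-1,1)} \oplus \Lambda^{k-1}V_{(n-1,1)} \cong V_{(n-k,1^k)}\oplus V_{(n-k+1,1^{k-1})},
\]
using the classical identification $\Lambda^j V_{(n-1,1)}\cong V_{(n-j,1^j)}$. The right hand side equals $D_{k+1}([n])\oplus D_k([n])$. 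By the adjunction \eqref{eq:scalar_P} together with $D_k([k])=V_{(1^k)}$, there is a canonical nonzero map $\PP_{1^k}\to D_k$ of $\F$-modules; it is surjective because it is an isomorphism at level $k$, and its kernel is isomorphic to the simple $D_{k+1}$ since its $n$-th level is $V_{(n-k,1^k)}$. Splicing the resulting short exact sequences $0\to D_{k+j+1}\to \PP_{1^{k+j}}\to D_{k+j}\to 0$ for $j\geq 0$ then yields the infinite projective resolution of $D_k$.

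For the finite resolution I would induct on $k$. The base $k=0$ is immediate, since a finitely generated $\F$-module of degree $0$ is a direct sum of copies of $\PP_0=D_0$. For the inductive step, I would choose a surjection $\pi\colon \PP_k \twoheadrightarrow V$ with $\PP_k$ a finite direct sum of Schur projectives of pure degree $\leq k$, as permitted by the definition of degree, and set $K=\ker \pi$, which is finitely generated by noetherianity of the category. The key step would then be to produce a short exact sequence
\[
0\to \mathbb{D}_{k-1}\to K\to V'\to 0
\]
with $\mathbb{D}_{k-1}$ a direct sum of copies of $D_k$ and $D_{k+1}$, and $V'$ finitely generated of degree at most $k-1$. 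Granting this, the inductive hypothesis supplies a resolution of $V'$ of length $k-1$ in the prescribed form, and a horseshoe splicing with the sequences for $\mathbb{D}_{k-1}$ and for $V$ would assemble the desired length-$k$ resolution of $V$.

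The hard part will be establishing this extension. A priori the composition factors of $K$ have no uniform degree bound, and one must separate the simples $D_j$ from the $C_\mu$. The observation that makes things work is precisely the first part of the proposition: any composition factor of $K$ whose degree exceeds $k-1$ must arise through the $\PP_{1^j}\twoheadrightarrow D_j$ mechanism and is therefore forced to be $D_k$ or $D_{k+1}$. Realising these as an honest submodule of $K$, rather than merely a subquotient, amounts to the vanishing of certain $\Ext^1$ groups between such $D$'s and the degree $\leq k-1$ quotient, and this extension-theoretic control is the genuinely technical point carried out in \cite[Section 6]{WG14}; the remainder of the argument is formal homological algebra.
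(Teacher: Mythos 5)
The paper does not prove this statement; it is cited verbatim from \cite[Theorem 6.39]{WG14}, so there is no internal proof to compare against. Your reconstruction is therefore being measured against the source, and it captures the correct strategy but has one concrete error and one acknowledged gap.

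Your derivation of the infinite resolution of $D_k$ is essentially right and is the standard route: $\PP_{1^k}([n]) = \Lambda^k\Q^n$, the splitting $\Q^n = V_{(n)}\oplus V_{(n-1,1)}$ gives $\Lambda^k V_{(n-1,1)}\oplus\Lambda^{k-1}V_{(n-1,1)} = V_{(n-k,1^k)}\oplus V_{(n-k+1,1^{k-1})} = D_{k+1}([n])\oplus D_k([n])$, the adjunction \eqref{eq:scalar_P} gives the surjection $\PP_{1^k}\twoheadrightarrow D_k$, and the kernel has the levels of $D_{k+1}$. (The identification of the kernel \emph{as an $\F$-module} with $D_{k+1}$ is slightly more than ``matching levels''; one should note that the kernel is generated in degree $k+1$, hence a quotient of $\PP_{1^{k+1}}$, hence has $D_{k+1}$ as its top --- but since the levels match $D_{k+1}$ on the nose, the quotient map is an isomorphism. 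This is a small gap worth closing.)

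Your base case is wrong as written. $\PP_0 = \PP_\emptyset$ is the constant functor $[n]\mapsto\Q$, while $D_0$ is $\Q$ concentrated at $[0]$; these are not isomorphic ($D_0$ is the unique simple submodule of $\PP_0$, and $\PP_0/D_0\cong D_1$). Consequently a degree-$0$ finitely generated module need not be a direct sum of copies of $\PP_0$ --- $D_0$ itself is a counterexample --- and the statement of the proposition must in fact be read with more care at $k=0$ than your argument allows.

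Finally, the central step of the inductive argument --- producing the short exact sequence $0\to\mathbb{D}_{k-1}\to K\to V'\to 0$ with $V'$ of degree $\leq k-1$ and $\mathbb{D}_{k-1}$ a sum of $D_k$'s and $D_{k+1}$'s --- is exactly where the substance of \cite[Section 6]{WG14} lies. You are right that it requires separating the $D_j$'s from the $C_\mu$'s among the composition factors of $K$ and then realising the $D$'s as an actual submodule via Ext-vanishing, and you are honest in flagging it, but the proposal does not supply that argument. As written, you have correctly identified the skeleton and proved the second half of the proposition, but the first half remains a black box deferred to the reference.
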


\begin{lemma} \label{lemma:ext}
We have the following:
\begin{enumerate}
\item if $h \neq k+i$ then $\Ext^i (D_k,D_h)=0$, and $\Ext^i (D_k,D_{k+i})=\Q$,
\item let $V$ a representation of degree $k$, then $\Ext^i(V, C_\lambda)=0$ if $k< |\lambda | +i$.
\end{enumerate} 
\end{lemma}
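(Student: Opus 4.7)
For \textbf{(1)}, the strategy is to compute the $\Ext$ groups directly from the infinite projective resolution of $D_k$ supplied by \Cref{prop:resolutions}. Applying $\Hom(-, D_h)$ and using \eqref{eq:scalar_P}, each term reduces to
\[ \Hom(\PP_{1^{k+m}}, D_h) \cong \Hom_{\SG_{k+m}}(V_{1^{k+m}}, D_h([k+m])). \]
Since $D_h([n]) = V_{(n-h+1, 1^{h-1})}$ for $n \geq h$ (and zero otherwise), the sign representation $V_{1^{k+m}}$ occurs in $D_h([k+m])$ if and only if $k+m = h$. Hence the entire complex is supported in the single cohomological degree $m = h-k$ (with value $\Q$) when $h \geq k$, and is identically zero when $h < k$; all differentials vanish for trivial reasons, which yields the stated formulas.

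For \textbf{(2)}, the key intermediate step will be the vanishing $\Ext^q(D_j, C_\lambda) = 0$ for every $j, q \geq 0$. Running the same projective-resolution argument for $D_j$, it suffices to check $\Hom(\PP_{1^n}, C_\lambda) = 0$ for every $n \geq 0$. For $n < |\lambda|$ this is immediate from $C_\lambda([n]) = 0$; for $n \geq |\lambda|$, Frobenius reciprocity together with the decomposition $\mathrm{Res}\,V_{1^n} = V_{1^{|\lambda|}} \boxtimes V_{1^{n-|\lambda|}}$ gives
\[ \Hom_{\SG_n}(V_{1^n}, C_\lambda([n])) \cong \Hom_{\SG_{|\lambda|}}(V_{1^{|\lambda|}}, V_\lambda) \otimes \Hom_{\SG_{n-|\lambda|}}(V_{1^{n-|\lambda|}}, 1_{n-|\lambda|}), \]
and the first factor vanishes because $\lambda \neq 1^{|\lambda|}$ (the assumption $\lambda_1 > 1$).

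With this vanishing in hand, I will apply the finite resolution of $V$ from \Cref{prop:resolutions} together with the hypercohomology spectral sequence
\[ E_1^{p,q} = \Ext^q(T_p, C_\lambda) \Rightarrow \Ext^{p+q}(V, C_\lambda), \]
where $T_p = \PP_{k-p} \oplus \mathbb{D}_{k-p}$ (with no $\mathbb{D}$ summand at $p=0$). The $\PP$-summands are projective and contribute only in row $q = 0$, while the $\mathbb{D}$-summands contribute zero in every row by the intermediate vanishing. The spectral sequence therefore degenerates and identifies $\Ext^i(V, C_\lambda)$ with $H^i(\Hom(\PP_{k-\bullet}, C_\lambda))$. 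Since $C_\lambda([n]) = 0$ for $n < |\lambda|$, this complex is zero in every degree $p$ with $k - p < |\lambda|$, so $\Ext^i(V, C_\lambda) = 0$ whenever $i > k - |\lambda|$, i.e.\ whenever $k < |\lambda| + i$. The one subtle point I anticipate is justifying the spectral sequence in the presence of the non-projective summands $\mathbb{D}_j$; this is harmless because $\Ext^*(\mathbb{D}_j, C_\lambda) = 0$ makes a Cartan--Eilenberg resolution collapse essentially for free, but it is where the careful bookkeeping will concentrate.
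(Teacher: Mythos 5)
Your proof of part (1) is essentially identical to the paper's: compute $\Ext^i(D_k, D_h)$ from the infinite projective resolution $\PP_{1^{k+\bullet}}$ of $D_k$, reduce via eq.\ \eqref{eq:scalar_P} to a $\Hom$ in a single symmetric group, and observe that the sign representation $V_{1^{k+m}}$ occurs in $D_h([k+m]) = V_{(k+m-h+1,1^{h-1})}$ only when $h = k+m$.

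For part (2), your argument follows the paper's route (apply $\Hom(-,C_\lambda)$ to the finite resolution $T_\bullet$ of $V$ and note that $\Hom(T_i, C_\lambda)$ vanishes once $k-i < |\lambda|$), but you add an intermediate step that the paper elides: you first establish $\Ext^q(D_j, C_\lambda)=0$ for all $j, q$, via the projective resolution of $D_j$ together with the observation that $\Hom(\PP_{1^n}, C_\lambda) \cong \Hom_{\SG_n}(V_{1^n}, C_\lambda([n]))$ vanishes because $\lambda_1 > 1$. This is exactly what is needed to justify using the \emph{non-projective} resolution $\PP_\bullet \oplus \mathbb{D}_\bullet$ to compute $\Ext(V,-)$ — the terms $\mathbb{D}_j$ are sums of simple, non-projective modules $D_a$, and without knowing they are $\Hom(-,C_\lambda)$-acyclic the identification $\Ext^i(V,C_\lambda) = H^i(\Hom(T_\bullet, C_\lambda))$ is not automatic. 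The paper's phrase ``it is enough to verify that $\Hom_\F(\PP_{k-i}\oplus\mathbb{D}_{k-i},C_\lambda)=0$'' silently assumes this acyclicity; your hypercohomology spectral sequence (or equivalently a dimension-shifting argument) closes that gap. So your proof is correct, same in spirit, and more complete on the one genuinely delicate point.
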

\begin{proof}$ $
\begin{enumerate}
\item Consider the resolution $\PP_\bigcdot$ of $D_k$ from \Cref{prop:resolutions}, we compute the $\Ext$ functor using the projective resolution 
$
\Ext^i(D_k,D_h) = H^i(\Hom(\PP_{\bigcdot},D_h)).
$
The complex $\Hom(\PP_{\bigcdot},D_h)$ is almost trivial because 
\[ \Hom_{\F}(\PP_{1^{k+i}},D_h) \cong \Hom_{\SG_{k+i}}(V_{1^{k+i}}, V_{(k+i-h+1,1^{h-1})}) \]
is zero for $k+i \neq h$ and for $h=k+i$ is $\Q$ by the Schur Lemma.
\item Consider the resolution $\PP_\bigcdot \oplus \mathbb{D}_\bigcdot$ of $V$ from \Cref{prop:resolutions}.
It is enough to verify that $\Hom_\F (\PP_{k-i} \oplus \mathbb{D}_{k-i}, C_\lambda)=0$, indeed $\Hom(D_a, C_\lambda)=0$ because they are two different simple representations and 
\[ \Hom_{\F}(\PP_\mu, C_\lambda)= \Hom_{\SG_{|\mu |}}(V_{\mu},0)=0,\]
since $|\mu | \leq k-i < | \lambda |$. \qedhere
\end{enumerate}
\end{proof}

Let $V$ be a finitely generated representation of $\F$ of degree $k$.
Define a canonical filtration $F_\bigcdot$ such that $F_i\subseteq V$ is the submodule generated by $V([j])$ for all $j\leq i$.


\begin{example}
Consider the short exact sequence
\[ 0 \to D_2 \to \PP_{1} \to D_1 \to 0,\]
on the object $[n]$, $\PP_{1}([n])=\Q^n$, $D_2([n])\cong \Q^{n-1}$ is the subobject of vectors whose coordinates have zero sum, and $D_1([n]) \cong \Q$ is their quotient.
The sequence does not split and the degree of the subject $D_2$ (equals to $2$) is bigger then the degree of $\PP_1$ (equals to $1$).

The inclusion $j\colon D_2([n]) \hookrightarrow \PP_1([n])$ has rank $n-1$, but the graded map $\gr_F j$ is zero.
\end{example}

\begin{example}
In \cite{Ryba}, minimal projective resolutions of $C_\lambda$ are described.
The minimal resolution of $C_{(4)}$ is
\[ 0 \to \mathbb{P}_{(1,1)} \to \mathbb{P}_{(2,1)} \oplus \mathbb{P}_{(3)} \to \mathbb{P}_{(4)} \to C_{(4)} \to 0.\]
It follows from eq.\ \eqref{eq:scalar_P} that $\dim \Ext^1_\F (C_{(4)},C_{(2)})=1$.
\end{example}

\begin{lemma}\label{lemma:JH}
Let $V$ a representation of degree $k$, then the composition factors of the Jordan H\"{o}lder filtration are $C_\lambda$ for $| \lambda | \leq k$ or $D_i$ for $i \leq k+1$.
\end{lemma}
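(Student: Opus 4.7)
The plan is to proceed by induction on $k$. In the base case $k=0$, every $V$ of degree zero is a quotient of $\PP_0^{\oplus r}$, and evaluation on each object $[n]$ gives the short exact sequence $0\to D_1\to \PP_0\to D_0\to 0$; hence the composition factors of $V$ are among $\{D_0,D_1\}$, matching the bound.

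For the inductive step, I apply \Cref{prop:resolutions} to present $V$ as a quotient of $\PP_k=\bigoplus_j \PP_{\mu_j}$ with $|\mu_j|\le k$. Since the composition factors of a quotient form a sub-multiset of those of the total, it suffices to bound the JH factors of each $\PP_\mu$ with $|\mu|=m\le k$. If $\mu=(1^m)$, the short exact sequence $0\to D_{m+1}\to \PP_{(1^m)}\to D_m\to 0$, checked by the identity $\binom{n}{m}=\binom{n-1}{m-1}+\binom{n-1}{m}$ at each $[n]$, gives JH factors $\{D_m,D_{m+1}\}$. If instead $\mu_1\ge 2$, then $C_\mu$ is a genuine simple module and the surjection $\PP_\mu\twoheadrightarrow C_\mu$ coming from~\eqref{eq:scalar_P} fits into
\[0\to K_\mu\to \PP_\mu\to C_\mu\to 0.\]
I would then apply \Cref{prop:resolutions} to $C_\mu$ (which has degree $m$), arranging, via the vanishing $\Hom(\PP_\lambda,C_\mu)=0$ for $\lambda\ne\mu$ with $|\lambda|\le m$, that the last projective term is precisely $\PP_\mu$. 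Then $K_\mu$ is a quotient of $P_{m-1}\oplus\mathbb{D}_{m-1}$, whose $\mathbb{D}_{m-1}$ part contributes only $D_m$ and $D_{m+1}$ while $P_{m-1}$ is a direct sum of $\PP_\nu$ with $|\nu|\le m-1<k$. Invoking the primary induction hypothesis on each such $\PP_\nu$ bounds its JH factors by $\{C_\lambda:|\lambda|\le m-1\}\cup\{D_i:i\le m\}$; combining and adjoining $C_\mu$ places the composition factors of $\PP_\mu$ inside $\{C_\lambda:|\lambda|\le m\}\cup\{D_i:i\le m+1\}\subseteq\{C_\lambda:|\lambda|\le k\}\cup\{D_i:i\le k+1\}$, which closes the induction.

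The hard part will be arranging the resolution of $C_\mu$ to terminate exactly at $\PP_\mu$; without this (for instance, if the last projective term contains an extra summand), the induction hypothesis might need to be invoked on projectives of degree exactly $m$, which would fail to close. The justification is the Hom-vanishing above, combined with the standard observation that superfluous summands in a projective resolution can be absorbed into the preceding position. Alternatively, one could invoke the structure of Ryba's minimal projective resolution \cite{Ryba} directly, which provides the purely projective resolution with $|\nu|<m$ in every positive position.
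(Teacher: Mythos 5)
Your proof is essentially correct, but it takes a genuinely different route from the paper. The paper works with the right-exact homology functor $H_0(V)([n]) = (V/F_{n-1}V)([n])$: it quotes Proposition 6.32 of \cite{WG14}, which says $H_0(\PP_k)[n]$ vanishes for $n>k+1$ and $H_0(\PP_k)[k+1]$ contains only sign representations, applies right-exactness to the Jordan--H\"older filtration, and then identifies which simple modules have $H_0$ supported in the allowable range (using $H_0(C_\lambda)[|\lambda|]=V_\lambda$ and $H_0(D_k)[k]=V_{1^k}$). Your argument instead runs an induction on $k$ through the finite projective resolution of \Cref{prop:resolutions}, peeling off the generators $\PP_\mu$ one at a time. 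The paper's approach is more direct but leans on an external result from \cite{WG14}; yours is self-contained modulo \Cref{prop:resolutions} at the cost of more bookkeeping.

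One remark that simplifies your argument: the passage you flag as ``the hard part'' --- arranging $P_m = \PP_\mu$ in the resolution of $C_\mu$ --- is unnecessary. If $P_m = \PP_\mu \oplus P'$ with $P'$ some additional projective summand of degree $\le m$, then since $\Hom(P',C_\mu)=0$ the kernel $\ker(P_m\to C_\mu)$ equals $K_\mu\oplus P'$, which is still a quotient of $P_{m-1}\oplus\mathbb{D}_{m-1}$. The Jordan--H\"older factors of a direct summand form a sub-multiset of those of the whole module, so the factors of $K_\mu$ are already bounded by those of $P_{m-1}\oplus\mathbb{D}_{m-1}$, and the induction closes without any minimality assumption on the resolution. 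Separately, the short exact sequence $0\to D_{m+1}\to\PP_{1^m}\to D_m\to 0$ deserves a touch more than a dimension count: the cleanest justification is exactness of the infinite resolution in \Cref{prop:resolutions}, which identifies $\ker(\PP_{1^m}\to D_m)$ with the cokernel $\PP_{1^{m+1}}/\mathrm{im}(\PP_{1^{m+2}})=D_{m+1}$.
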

\begin{proof}
It is enough to prove the claim for $V=\PP_k$ a Schur projective representation of degree $k$.
The homology functor $H_0$ is defined by $H_0(V)([n])= V/F_{n-1}V ([n])$, this functor is right exact.

Consider a Jordan H\"{o}lder filtration $G_\bigcdot$ of $\PP_k$.
Proposition 6.32 of \cite{WG14} shows that $H_0(G_i)[n]=0$ for $n>k+1$ and that $H_0(G_i)[k+1]$ is sum of sign representations of $\SG_{k+1}$.
The short exact sequence
\[  H_0(G_{i-1})[n] \to H_0(G_i)[n] \to H_0(G_i/G_{i-1})[n] \to 0\]
implies $H_0(G_i/G_{i-1})[n]=0$ for $n>k+1$ and $H_0(G_i/G_{i-1})[k+1]$ contains only the sign representation.
Now $G_i/G_{i-1}$ is simple and the homology of simple modules of degree $j$ is concentrate in degree $j$.
The equalities $H_0(C_\lambda)[\lvert \lambda \lvert]= V_\lambda$ and $H_0(D_k)[k]= V_{1^k}$ imply the result.
\end{proof}

\begin{theorem} \label{thm:gr_repr}
Let $f \colon V \to W$ be a morphism between two finitely generated $\F$-representations and $\gr_F f \colon \gr_F V \to \gr_F W$ be the corresponding graded map.
Suppose that the composition factors of the Jordan H\"{o}lder filtration of $V$ are different from $D_i$, for $i \in \N$.
Then:
\begin{enumerate}
\item $\gr_F V$ is semisimple and the addenda of $F_n/F_{n-1}$ are of the type $C_\lambda$ for $\lambda \vdash n$,
\item $\rk (f) = \rk (\gr f)$, i.e the map $f$ is strict with respect  the filtration $F_\bigcdot$.
\end{enumerate}
\end{theorem}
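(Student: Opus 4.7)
The plan is to identify the canonical filtration $F_\bigcdot V$ intrinsically in terms of Jordan--Hölder data, and then to read off both statements formally. Specifically, under the hypothesis on $V$, I claim that $F_n V$ coincides with the largest submodule $G_n V \subseteq V$ whose composition factors $C_\lambda$ all satisfy $|\lambda| \leq n$. The inclusion $F_n V \subseteq G_n V$ follows from \Cref{lemma:JH}: by definition $F_n V$ has degree $\leq n$, so its JH factors are $C_\mu$ with $|\mu|\leq n$ or $D_i$ with $i \leq n+1$, and the $D_i$ option is excluded because $V$ (hence every submodule) has no such factors. Conversely, by induction on JH length, any submodule $W$ whose factors are only $C_\mu$ with $|\mu| \leq n$ is generated in degrees $\leq n$, and hence contained in $F_n V$: the base case $W = C_\mu$ is immediate since $C_\mu$ is generated by $C_\mu([|\mu|])$, and for an extension $0 \to W' \to W \to W'' \to 0$ one lifts generators through the surjection $W \twoheadrightarrow W''$.

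With this characterization, the filtration $F_\bigcdot$ is exact on the subcategory of finitely generated representations whose JH factors are only $C_\lambda$'s: for a short exact sequence $0 \to V' \to V \to V'' \to 0$ in this subcategory, $V' \cap F_n V$ has only $C_\lambda$ factors of size $\leq n$, so $V' \cap F_n V \subseteq F_n V'$ by maximality, and elements of $V''([j])$ with $j \leq n$ lift to $V([j])$, giving $F_n V \twoheadrightarrow F_n V''$. In particular $\gr_F$ is exact on this subcategory. To prove (1), I observe that $F_n V / F_{n-1} V$ admits no JH factor $C_\lambda$ with $|\lambda| < n$: if $M$ were the maximal such submodule, its preimage in $F_n V$ would have all factors of size $\leq n-1$, hence lie in $F_{n-1} V$, forcing $M = 0$. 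Every factor therefore has $|\lambda| = n$, and \Cref{lemma:ext}(2) gives $\Ext^1(C_\mu, C_\lambda) = 0$ whenever $|\mu| = |\lambda| = n$, so an induction on length splits $F_n V/F_{n-1} V$ into a direct sum of $C_\lambda$'s with $|\lambda| = n$.

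For (2), the image $f(V)$ is a quotient of $V$ and thus inherits the hypothesis, so the characterization applies: $f(V) \cap F_n W$ has only $C_\lambda$ factors (inherited from $f(V)$) of size $\leq n$ (inherited from $F_n W$), hence lies in $F_n f(V)$, and the reverse inclusion is automatic. Combined with surjectivity $F_n V \twoheadrightarrow F_n f(V)$, this yields the strictness identity $f(F_n V) = f(V) \cap F_n W$. Applying the exact functor $\gr_F$ to $0 \to \ker f \to V \to f(V) \to 0$ then identifies $\im(\gr f)$ with $\gr f(V)\subseteq \gr W$, and since $\dim_\Q \gr U([m]) = \dim_\Q U([m])$ for any filtered module $U$, we obtain $\rk f = \rk \gr f$ level by level. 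The main obstacle is the characterization $F_n V = G_n V$: the hypothesis forbidding $D_i$ factors is essential to avoid the pathology illustrated just before the theorem, where the submodule $D_2 \hookrightarrow \PP_1$ has degree $2$ even though $\PP_1 = F_1 \PP_1$. Once $F = G$ is established, the rest of the argument is formal.
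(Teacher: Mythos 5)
Your approach is genuinely different from the paper's and, with one fix, is correct. The paper proves part (1) by observing that $F_nV/F_{n-1}V$ vanishes on $[n-1]$ (forcing the factors to be $C_\lambda$ with $\lambda\vdash n$), and proves strictness in part (2) by a separate argument involving the right-exact homology functor $H_0$: the subquotient generated by $y$ has $H_0$ free of sign representations, while the one generated by $x$ (sitting in degree $\le n-1$) has $H_0$ at $[n]$ consisting only of signs, so the induced surjection on $H_0$ must be zero. You sidestep this entirely by identifying $F_nV$ intrinsically as the largest submodule all of whose Jordan--H\"older factors are $C_\lambda$ with $|\lambda|\le n$; this characterization, together with its consequence that $F_\bigcdot$ and $\gr_F$ are exact on the subcategory of modules without $D_i$-factors, makes both parts essentially formal. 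The characterization itself and the deduction of strictness from it are argued correctly and give a cleaner, more conceptual route than the $H_0$/sign-representation analysis.

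There is, however, a gap in your proof of (1). You assert that $Q:=F_nV/F_{n-1}V$ has no Jordan--H\"older factor $C_\lambda$ with $|\lambda|<n$, justifying this by showing that the maximal submodule $M\subseteq Q$ with all factors of size $<n$ is zero. But $M=0$ only excludes small \emph{submodules} (equivalently, small pieces of the socle); it does not by itself rule out a small composition factor appearing higher up in a Jordan--H\"older filtration, since $Q$ is not yet known to be semisimple at this stage of the argument. Two short repairs are available: (i) as the paper does, note that $Q([j])=V([j])/V([j])=0$ for all $j\le n-1$, and since $\dim Q([j])$ is the sum over composition factors of their dimensions at $[j]$, any factor $C_\mu$ with $|\mu|<n$ would force $Q([|\mu|])\ne 0$; or (ii) invoke \Cref{lemma:ext}(2) once more: $\Ext^1(C_\mu,C_\nu)=0$ whenever $|\mu|\le|\nu|$, so by induction on Jordan--H\"older length any factor $C_\mu$ with $|\mu|<n$ would split off from below and produce a small simple submodule, contradicting $M=0$. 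With either repair, the remainder of your argument goes through.
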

\begin{proof}
Consider the composition factors of $F_nV/F_{n-1}V$ have degree at least $n$ because $F_nV/F_{n-1}V([n-1])=0$ and at most $n$ by \Cref{lemma:JH} using the hypothesis that $D_{n+1}$ does not appear.
Therefore the composition factors are $C_\lambda$ for $\lambda \vdash n$.
Since $\Ext^1(C_\lambda, C_\mu)=0$ for $\lambda, \mu \vdash n$, an inductive reasoning proves that $F_n/F_{n-1}$ is semisimple.

Suppose that $f(y)=x$ with $y \in F_nV \setminus F_{n-1}V$ and $x \in F_{n-1}W$, we show that $x \in f(F_{n-1}V)$.
Let $(x)$ and $(y)$ be the subrepresentation generated by $x$ and $y$, $f$ induces a surjective morphism
\[ \overline{f} \colon \faktor{(y)}{(y) \cap F_{n-1}V} \to \faktor{(x)}{(x) \cap f(F_{n-1}V)}.\]
As noticed above $H_0((y)/ (y) \cap F_{n-1}V)$ is concentrate in degree $n$ and does not contain the sign representation, instead $H_0((x)/(x)\cap f(F_{n-1}V))[n]$ contains only some copies of the sign representation.
Thus, $H_0(\overline{f})$ is zero and surjective because $\overline{f}$ does.
This implies $H_0((x)/(x)\cap f(F_{n-1}V))=0$, so $(x)/(x)\cap f(F_{n-1}V)=0$ and hence $x \in f(F_{n-1}V)$.
\end{proof}

\subsection*{Representation theory of \texorpdfstring{$E(X)$}{E(X)}}
From now on, we assume that $\chi(X)=0$.
Let $\F(k,n)$ be the set of all maps from $[k]$ to $[n]$. 
The increasing filtration $F_\bigcdot E(X)$ defined in the previous section can be described as
\[ F_k E(X) = \sum_{f \in \F(k,n)} \im f_*.\]
The module $F_k E(X)[n]$ is, by definition, the submodule of $E(X,n)$ generated by all monomials in $G_{i,j}$ and $x_i$ (for $i,j\leq n$ and $x\in H^\bigcdot (X)$) with at most $k$ different indices.

\begin{example}
Let $x \in H^\bigcdot (X)$ be any element.
We have $G_{1,2} \in F_2 E(X) \setminus F_1 E(X)$, $x_1-x_2 \in F_1 E(X)$ and $x_1x_2 \in F_2 E(X) \setminus F_1 E(X)$.
\end{example}

We fix a graded basis $\{b_i\}_i$ of $H^\bigcdot(X)$ and assume $b_1 =1 \in H^0(X)$.
A \textit{labelled partition} $\lambda$ of $n$ is $\lambda \vdash n$ whose blocks are decorate with elements of the fixed basis.
We define $q(\lambda)=n- \size (\lambda) $ the difference between $n$ and the number of blocks $\size (\lambda) $, and $p(\lambda)=\sum_{i=1}^{\size( \lambda )} \deg (\lambda_i)$ the sum of the cohomological degrees of all the labels.
Let $L(\lambda) < \SG_n$ be a subgroup generated by disjoint cycles of length $\lambda_i$ for $i=1, \dots, \size(\lambda)$, $L(\lambda)$ is isomorphic to $\times_{i=1}^{\size (\lambda)} C_{\lambda_i}$.
Let $N(\lambda)< \SG_n$ be the subgroup of the stabilizer of $L(\lambda)$ that permutes the cycles with the same labels, it is isomorphic to $\times_{j} \SG_{n_j}$ for some $n_j$.
Finally, define $Z(\lambda)< \SG_n$ as the subgroup generated by $L(\lambda)$ and $N(\lambda)$, i.e.\ the semidirect product $L(\lambda) \rtimes N(\lambda)$.

We denote the sign representation of $\SG_i$ by $\epsilon_i$ and a faithful representation of the cyclic group $C_i$ by $\varphi_i$.
Let $\varphi_{\lambda}, \alpha_{\lambda}, \xi_{\lambda}$ be the one dimensional representations of $L(\lambda), N(\lambda), Z(\lambda)$ defined by:
\begin{align*}
&\varphi_{\lambda}= \epsilon_{n|_{L(\lambda)}} \otimes ( \boxtimes_{i=1}^{\size (\lambda)} \varphi_{\lambda_i}), \\
&\alpha_{\lambda}= \boxtimes_j \epsilon_j^{\otimes m_j}, \\
& \xi_{\lambda}= \varphi_{\lambda} \boxtimes \alpha_{\lambda},
\end{align*}
where $m_j= \lambda_i + \deg(\lambda_i)+1$ where $\lambda_i$ is any block permuted by $\SG_{n_j}$.

\begin{example}
Consider an elliptic curve $C=(S^1)^2$ and the basis of $H^\bigcdot(C)$ given by $1,x,y,xy$.
Let $\lambda=(4,4,4,4,1,1,1) \vdash 19$ with labels $(xy,xy,xy,1,x,x,x)$.
We have $\size ( \lambda )= 7$, $q(\lambda)=12$, $p(\lambda)=9$.
The associated groups are $L(\lambda) \cong (C_5)^{\times 4}$ generated by $(1,2,3,4), (5,6,7,8), (9,10,11,12), (13,14,15,16)$, $N(\lambda) \cong \SG_3 \times \SG_3$ generated by $(1,5)(2,6)(3,7)(4,8)$, $(1,9)(2,10)(3,11)(4,12)$, $(17,18)$, $ (17,19)$, and $Z(\lambda)= (C_4 \wr \SG_3) \times C_4 \times \SG_3 < \SG_{19}$.
The representations are $\varphi_\lambda = \varphi_4 \boxtimes \varphi_4 \boxtimes \varphi_4 \boxtimes \varphi_4$ (because $\epsilon_{19|_{C(\lambda)}}=1$), $\alpha= \epsilon_3 \boxtimes \epsilon_3$ (because $4+2+1$ and $1+1+1$ are odd) and $\xi_\lambda = (\varphi_4 \wr \epsilon_3) \boxtimes \varphi_4 \boxtimes \epsilon_3$.
\end{example}

A decomposition of $E(X,n)$ into $\SG_n$ representations is provided in \cite{AAB14}.
\begin{theorem}
Let $X$ be a smooth projective variety. The Kri\v{z} model decomposes as
\[ E^{p,q}(X,n) \cong \bigoplus_{\substack{q(\lambda)=q\\ p(\lambda)=p}} \Ind_{Z(\lambda)}^{\SG_n} \xi_\lambda. \]
\end{theorem}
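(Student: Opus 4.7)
The plan is to exhibit an explicit basis of $E^{p,q}(X,n)$ indexed by labelled partitions $\lambda \vdash n$ with $p(\lambda)=p$ and $q(\lambda)=q$, and to recognize each $\SG_n$-orbit of basis vectors as an induced representation from $Z(\lambda)$.

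First I would establish a normal form for monomials. The relation $G_{i,j}(x_i-x_j)=0$ forces all $x$-factors supported on the same connected component of the graph of $G$-generators in a monomial to carry a single label from the fixed basis $\{b_k\}$ of $H^\bigcdot(X)$. The Arnold relation then allows any product of $G$'s on a component of size $k$ to be rewritten as a linear combination of $(k-1)!$ ``path'' monomials. A standard argument shows these yield a basis of $E(X,n)$ indexed by: a set partition of $[n]$; a cohomology label per block; and a cyclic ordering of each block.

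Next, for each labelled partition $\lambda$ I would pick a distinguished basis vector $\omega_\lambda$ by taking the blocks to be consecutive intervals of $[n]$, arranging the $G$-factors of each block as a path in the natural order, and placing the label on the first index of each block. The stabilizer of the line $\Q\omega_\lambda$ in $\SG_n$ is then exactly $Z(\lambda)=L(\lambda)\rtimes N(\lambda)$: a generator of $C_{\lambda_i}<L(\lambda)$ cyclically shifts the $i$-th block, while $N(\lambda)$ permutes blocks of equal size carrying equal labels. I would then compute the character of $Z(\lambda)$ on this line. On $L(\lambda)$, a cyclic shift of a length-$\lambda_i$ path produces the faithful character $\varphi_{\lambda_i}$, and the cumulative sign from anticommuting the odd-degree $G_{i,j}$'s with the label furnishes the twist $\epsilon_n|_{L(\lambda)}$, yielding $\varphi_\lambda$. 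On $N(\lambda)$, swapping two equal-size equally-labelled blocks permutes two commuting-or-anticommuting factors of total degree $\lambda_i(2\dim_\C X-1)+\deg(\lambda_i)$; the parity depends on $m_j=\lambda_i+\deg(\lambda_i)+1$ modulo $2$, giving $\alpha_\lambda$. The total character on $Z(\lambda)$ is therefore $\xi_\lambda = \varphi_\lambda \boxtimes \alpha_\lambda$.

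Finally, the $\SG_n$-orbit of $\omega_\lambda$ spans a submodule isomorphic to $\Ind_{Z(\lambda)}^{\SG_n}\xi_\lambda$; distinct labelled partitions produce disjoint orbits, and the number of normal-form monomials of labelled type $\lambda$ equals $[\SG_n:Z(\lambda)]$, so these induced submodules exhaust $E^{p,q}(X,n)$. The main obstacle is sign bookkeeping: one must track how anticommutation of the odd-degree generators $G_{i,j}$ and the basis labels $b_l$ combines with cyclic shifts to produce exactly $\varphi_\lambda=\epsilon_n|_{L(\lambda)}\otimes\bigl(\boxtimes_i \varphi_{\lambda_i}\bigr)$, and verify the precise parity $m_j=\lambda_i+\deg(\lambda_i)+1$ appearing in $\alpha_\lambda$. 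Choosing the distinguished monomials $\omega_\lambda$ carefully, so that the sign characters on $L(\lambda)$ and $N(\lambda)$ factor as stated, is the delicate point.
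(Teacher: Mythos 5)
This theorem is not proved in the paper; it is simply quoted from AAB14, so there is no in-paper proof to compare against. That said, your outline has a genuine gap at the central step. You claim that the line $\Q\omega_\lambda$ spanned by a distinguished path monomial is stabilized by $Z(\lambda)$, with the cycle $C_{\lambda_i}$ acting by the scalar $\varphi_{\lambda_i}$. This fails already for a block of size $3$: for $\omega=G_{1,2}G_{2,3}$ the Arnold relation gives
\[
(1\,2\,3)\cdot\omega \;=\; G_{2,3}G_{1,3}\;=\;-G_{1,2}G_{2,3}+G_{1,2}G_{1,3},
\]
which is not a scalar multiple of $\omega$; in fact no monomial of $E(X,n)$ is an eigenvector of a $k$-cycle for $k\geq 3$. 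So $\omega_\lambda$ does not span a one-dimensional $Z(\lambda)$-subrepresentation and the ``orbit of an eigenline'' argument collapses. The mechanism you want is Frobenius reciprocity applied block-wise, not an eigenline: the top-degree piece of the Orlik--Solomon algebra on a $k$-element block is $\epsilon_k\otimes \mathrm{Lie}_k\cong\Ind_{C_k}^{\SG_k}\bigl(\epsilon_k|_{C_k}\otimes\varphi_k\bigr)$ as an $\SG_k$-module, and the theorem follows by tensoring this with the cohomology label and inducing up through the subgroup of $\SG_n$ stabilizing the labelled set partition. Replacing ``the stabilizer of $\Q\omega_\lambda$ is $Z(\lambda)$'' by this block-wise identification is the missing idea; everything else in your normal-form reduction survives.

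There is also a smaller slip in the sign analysis on $N(\lambda)$: a block of size $\lambda_i$ is connected by a path with $\lambda_i-1$ edges, so the block factor has degree $(\lambda_i-1)(2\dim_\C X-1)+\deg(\lambda_i)$, not $\lambda_i(2\dim_\C X-1)+\deg(\lambda_i)$. The former has parity $\lambda_i+\deg(\lambda_i)+1\equiv m_j\pmod 2$, consistent with the paper's $\alpha_\lambda$; your formula has the opposite parity, so taken literally it would produce the wrong sign character when two equal blocks are swapped, even though you quote the correct $m_j$.
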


For each representation $V=\bigoplus_\lambda V_\lambda^{n_\lambda} $ of $\SG_n$, we define the $F$-representation $C_V= \bigoplus_\lambda C_\lambda^{n_\lambda}$.
For any labelled partition $\lambda$ let $f(\lambda)$ the number of blocks of size $1$ labelled with $1 \in H^\bigcdot(X)$.

\begin{theorem}\label{thm:grad_cohomology}
Let $X$ be a smooth projective algebraic variety with $\chi(X)=0$. Then 
\[ \gr_{F}^{\bigcdot} H^{\bigcdot,\bigcdot}(E(X), \dd) =  H^{\bigcdot,\bigcdot}(\gr_{F}^{\bigcdot} E(X), \gr_{F}^{\bigcdot} \dd)\]
and for $q>0$ the $\F$-representation $E^{p,q}(X)$ has associated graded:
\[\gr_{F}^{r} E^{p,q}(X) = \bigoplus_\lambda C_{\Ind_{Z(\lambda)}^{\SG_r} \xi_\lambda},\]
where the sum is taken over all labelled partitions $\lambda \vdash r$ with $p(\lambda)=p$, $q(\lambda)=q$, and $f(\lambda)=0$.
\end{theorem}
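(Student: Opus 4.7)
The statement has two parts: an explicit formula for $\gr_F^r E^{p,q}(X)$ when $q>0$, and the identity $\gr_F^\bigcdot H^{\bigcdot,\bigcdot}(E(X),\dd) = H^{\bigcdot,\bigcdot}(\gr_F^\bigcdot E(X), \gr_F^\bigcdot \dd)$. The plan is to establish the decomposition first, and then to use it to verify the hypothesis of \Cref{thm:gr_repr}(2), whence strictness of $\dd$ and the commutation of cohomology with $\gr_F$ will follow.

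For the decomposition, the starting point is the splitting
\[E^{p,q}(X,n) = \bigoplus_{\lambda} \Ind_{Z(\lambda)}^{\SG_n} \xi_\lambda\]
from \cite{AAB14}, indexed by labelled partitions $\lambda \vdash n$ with $p(\lambda) = p$, $q(\lambda) = q$. Every monomial in the $\SG_n$-orbit of type $\lambda$ involves exactly $n-f(\lambda)$ distinct indices, since each singleton block labelled $1 \in H^0(X)$ occupies an index without contributing a factor to the product. Because the decomposition is a direct sum, each summand lies in a single filtration step and
\[F_r E^{p,q}(X)[n] = \bigoplus_{\lambda:\ n-f(\lambda) \leq r} \Ind_{Z(\lambda)}^{\SG_n} \xi_\lambda.\]
A labelled partition $\lambda \vdash n$ with $n-f(\lambda) = r$ is the padding of a unique reduced partition $\mu \vdash r$ (with $f(\mu) = 0$) by $n-r$ trivial singletons; under this bijection $Z(\lambda) = Z(\mu) \times \SG_{n-r}$ and $\xi_\lambda = \xi_\mu \boxtimes 1_{n-r}$. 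The latter requires observing that each padded singleton contributes $m_j = 1+\deg(1)+1 = 2$ to $\alpha_\lambda$, so that $\epsilon_{n-r}^{\otimes 2}$ is trivial, while $\varphi_\lambda$ only involves cycles of length $\geq 2$ and hence agrees with $\varphi_\mu$. Induction in stages then yields
\[\Ind_{Z(\lambda)}^{\SG_n}\xi_\lambda = \Ind_{\SG_r\times\SG_{n-r}}^{\SG_n}\bigl(\Ind_{Z(\mu)}^{\SG_r}\xi_\mu \boxtimes 1_{n-r}\bigr) = C_{\Ind_{Z(\mu)}^{\SG_r}\xi_\mu}[n]\]
by the definition of $C_V$, and summing over the reduced $\mu$ gives the claimed formula.

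For the commutation, the decomposition just established shows that $\gr_F E^{p,q}(X)$ is a direct sum of modules of the form $C_V$, whose Jordan--H\"older factors are all of type $C_\nu$. Because the multiset of Jordan--H\"older factors of a filtered module agrees with that of its associated graded, $E^{p,q}(X)$ itself contains no $D_k$ in its composition series. \Cref{thm:gr_repr}(2) then applies to $\dd$ (which is an $\F$-morphism by the preceding proposition, using $\chi(X)=0$), making it strict with respect to $F_\bigcdot$, and the standard diagram chase for strict filtered complexes produces the commutation of $\gr_F$ with cohomology. The delicate point I expect is the character identity $\xi_\lambda = \xi_\mu \boxtimes 1_{n-r}$: the vanishing of the sign contribution from the $\SG_{n-r}$ factor (via the parity of $m_j = 2$) and the invisibility of padded singletons to the cyclic character $\varphi_\lambda$ must both be checked carefully against the definitions. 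Once this compatibility is secured, the identification through induction in stages, and the passage from the graded decomposition to the Jordan--H\"older argument, are purely formal.
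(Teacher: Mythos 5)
The proposal inverts the logical order of the paper's argument in a way that creates a genuine gap. You attempt to establish the decomposition $\gr_F^r E^{p,q}(X) = \bigoplus_\lambda C_{\Ind_{Z(\lambda)}^{\SG_r}\xi_\lambda}$ first, and then deduce from it that $E^{p,q}(X)$ has no $D_k$ among its Jordan--H\"older factors, so that \Cref{thm:gr_repr} applies. But the step where you identify $\Ind_{\SG_r\times\SG_{n-r}}^{\SG_n}\bigl(\Ind_{Z(\mu)}^{\SG_r}\xi_\mu \boxtimes 1_{n-r}\bigr)$ with $C_{\Ind_{Z(\mu)}^{\SG_r}\xi_\mu}[n]$ ``by the definition of $C_V$'' only produces an isomorphism of $\SG_n$-modules for each fixed $n$, not an isomorphism of $\F$-modules. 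The $\F$-module $C_V$ is by construction semisimple, a direct sum of the simples $C_\nu$; but a priori $\gr_F^r E^{p,q}(X)$ is just some $\F$-module whose $\SG_n$-levels you have computed, and concluding that it \emph{is} this semisimple module requires precisely the semisimplicity of $\gr_F$ provided by \Cref{thm:gr_repr}(1) --- which in turn requires you to already know that no $D_i$ appears as a composition factor. So the argument is circular as written.

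Moreover, the identification is not even well-defined without an extra input you never supply: the simple $C_\nu$ is only defined for $\nu_1>1$, so $C_{\Ind_{Z(\mu)}^{\SG_r}\xi_\mu}$ makes sense only if the sign representation $V_{1^r}$ does not occur in $\Ind_{Z(\mu)}^{\SG_r}\xi_\mu$. (Indeed $\Ind_{\SG_r\times\SG_{n-r}}^{\SG_n}\bigl(V_{1^r}\boxtimes 1_{n-r}\bigr) = V_{(n-r,1^r)}\oplus V_{(n-r+1,1^{r-1})}$ is not the underlying $\SG_n$-module of a single simple $\F$-module, and the second summand is exactly $D_r([n])$.) This vanishing of the sign representation in $E^{p,q}(X,n)$ for $q>0$ is the crux of the paper's proof, established there by a direct character computation with $\xi_\lambda = \varphi_\lambda\boxtimes\alpha_\lambda$ showing that the multiplicity $\langle 1_{C_{\lambda_i}},\varphi_{\lambda_i}\rangle$ vanishes whenever $\lambda_i>1$; from this the paper deduces directly (without going through $\gr_F$) that $D_i$ cannot occur, applies \Cref{thm:gr_repr}, and only then reads off the decomposition. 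Your bookkeeping of the padding bijection $\lambda\leftrightarrow\mu$, the identification $Z(\lambda)=Z(\mu)\times\SG_{n-r}$ with $\xi_\lambda=\xi_\mu\boxtimes 1_{n-r}$, and induction in stages, are all correct and match the second half of the paper's argument; what is missing is the sign-representation computation, which must come first.
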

\begin{proof}
We first prove that the sign representation $\epsilon_n$ does not appear in $E^{p,q}(X,n)$ for $q>0$.
We have
\begin{align*}
\langle \epsilon_n, E^{p,q}(X,n) \rangle_{\SG_n} &= \sum_{\lambda} \langle \epsilon_n, \Ind_{Z(\lambda)}^{\SG_n} \xi_\lambda \rangle_{\SG_n} \\
&=\sum_{\lambda} \langle \epsilon_{n|_{Z(\lambda)}}, \xi_\lambda \rangle_{Z(\lambda)} \\
&=\sum_{\lambda} \langle \epsilon_{n|_{L(\lambda)}}, \varphi_\lambda \rangle_{L(\lambda)} \langle \epsilon_{n|_{N(\lambda)}}, \alpha_\lambda \rangle_{N(\lambda)} \\
&=\sum_{\lambda} \langle 1_{L(\lambda)}, \boxtimes_{i=1}^{\|\lambda \|} \varphi_{\lambda_i} \rangle_{L(\lambda)} \langle \epsilon_{n|_{N(\lambda)}}, \alpha_\lambda \rangle_{N(\lambda)} \\
&=\sum_{\lambda} \langle \epsilon_{n|_{N(\lambda)}}, \alpha_\lambda \rangle_{N(\lambda)}  \prod_{i=1}^{\| \lambda \|}\langle 1_{\lambda_i}, \varphi_{\lambda_i} \rangle_{C_{\lambda_i}} =0,
\end{align*}
because $\langle 1_{\lambda_i}, \varphi_{\lambda_i} \rangle_{C_{\lambda_i}}\neq 0$ if and only if  $1_{\lambda_i}=\varphi_{\lambda_i}$ if and only if $\lambda_i=1$.
The sign representation appears only in $E^{p,0}(X,n)$.

It follows that $D_i$ cannot be a composition factor of $E^{p,q}(X)$ for $q>0$, so by \Cref{thm:gr_repr} we obtain that $\dd$ is strict. 

The second part of the statement follows from the fact that the submodule $\Ind_{Z(\lambda)}^{\SG_n} \xi_\lambda$ is contained in $F_rE^{p,q}(X)([n])$ if and only if $f(\lambda)\geq n-r$.
Therefore,
\begin{equation*}
\faktor{F_rE^{p,q}(X)}{F_{r-1}E^{p,q}(X)}([r]) = \bigoplus_{\lambda \textnormal{ s.t.\ } f(\lambda)=0} \Ind_{Z(\lambda)}^{\SG_n} \xi_\lambda,
\end{equation*}
and since $\gr^r_F E^{p,q}(X)$ is semisimple, we obtain the claimed equality.
\end{proof}

\begin{example}
In the case $X=C$, we have $\dd(G_{i,j})=(x_i-x_j)(y_i-y_j)$ and 
$\gr_F \dd (G_{i,j})= -x_iy_j-x_jy_i$.
Moreover $\gr_F \dd (x_i G_{i,j}) = x_ix_jy_j - x_ix_jy_i $ and notice that the product of $x_i$ and $G_{i,j}$ in $\gr^\bigcdot _F E(C)$ is zero.
\end{example}

\begin{remark}
Let $M$ be a even-dimensional smooth manifold with $\chi(M)=0$.
The analogous result of \Cref{thm:grad_cohomology} holds for the Leray spectral sequence for the inclusion $\conf(M,n)\hookrightarrow M^n$.
Indeed the collection of spectral sequences for each $n\in \N$ is an $\F$-module and the filtration $F$ is strictly compatible with the differentials $\dd_i$ (for all $i \in \N$) because the sign representation can appears only in the $0$-th row (i.e.\ $q=0$).
\end{remark}
\subsection*{Behaviour of mixed Hodge numbers}

Recall that every polynomial $P(t)$ in $\Q[t]$ such that $P(n) \in \N$ for all $n \in \N$ can be written uniquely as
\[ P(t) = \sum_{i=0}^{\deg P} a_i \binom{t}{i},\]
for some $a_i \in \Z$.
The value $a_i$ not need to be positive.

\begin{example}
Consider $\dim H^2(\conf(S^3,n);\Q)$ for $n>0$, it is known that is polynomial in $n$ and it value is $\frac{(n-1)(n-2)}{2}$.
However $a_1=-1$ because  
\[ \dim H^2(\conf(S^3,n);\Q) = \frac{(n-1)(n-2)}{2} = \binom{n}{2} - \binom{n}{1}+ \binom{n}{0}.\]
Indeed $H^2(\conf(S^3)) \cong D_3$ as $\F$-module (included in position $(0,2)$ of the Leray spectral sequence).
\end{example}

We collect the information about $\dim H^{p,q}(E(X,n),\dd)$ in a polynomial 
\begin{equation*}
P^{p,q}(n):= \dim H^{p,q}(E(X,n),\dd) = \sum_{i=0}^{p+2q} a^{p,q}_i \binom{n}{i},
\end{equation*}
for some unique integers $a^{p,q}_i$.
\begin{proposition}\label{prop:coeff_binom}
For $q>0$, the coefficients $a_i^{p,q}$ are positive and coincide with
\[ a_i^{p,q} = \dim H^{p,q} \left( \faktor{E(X,i)}{F_{i-1}E(X,i)},\dd \right).\]
\end{proposition}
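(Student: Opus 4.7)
The plan is to combine \Cref{thm:grad_cohomology} (strictness of $\dd$ on $F_\bigcdot$ in bidegree $(p,q)$ with $q>0$) with the elementary dimension formula $\dim C_V([n]) = \binom{n}{|\lambda|}\dim V$ for an $\SG_{|\lambda|}$-representation $V$. Doing so writes $\dim H^{p,q}(E(X,n),\dd)$ as a nonnegative integer combination of $\binom{n}{r}$, and uniqueness of the binomial expansion of an integer-valued polynomial then identifies the coefficients $a_i^{p,q}$.

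First I would use strictness to replace $H^{p,q}$ of the filtered complex by $H^{p,q}$ of the associated graded. Since $\dd(G_{i,j})=\Delta_{i,j}$ only uses the indices $i,j$ already present and $\dd(x_i)=0$, the differential preserves each submodule $F_r$, so $\gr_F \dd$ decomposes as a direct sum of differentials, one on each $\gr_F^r E(X)$ separately. Combined with \Cref{thm:grad_cohomology}, this gives
\[ \dim H^{p,q}(E(X,n),\dd) = \sum_{r\geq 0} \dim H^{p,q}\bigl(\gr_F^r E(X,n),\, \gr_F \dd\bigr). \]

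Next I would argue that each summand has the form $\binom{n}{r}$ times an integer depending only on $r$. By \Cref{thm:grad_cohomology}, $\gr_F^r E^{p,q}(X)$ is a finite direct sum of modules $C_W$ with $W$ an $\SG_r$-representation; by \Cref{thm:gr_repr}(1) this module is semisimple, so every subquotient is again a direct sum of $C_\mu$'s with $|\mu|=r$. Consequently $H^{p,q}(\gr_F^r E(X), \gr_F \dd) \cong C_{U_r^{p,q}}$ for some $\SG_r$-representation $U_r^{p,q}$, and evaluation at $[n]$ gives $\binom{n}{r}\dim U_r^{p,q}$.

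To identify $\dim U_r^{p,q}$ with the claimed expression, I would evaluate everything at $[r]$: because $F_r E(X,r) = E(X,r)$, the module $\gr_F^r E(X)([r])$ equals $E(X,r)/F_{r-1}E(X,r)$, and the induced differential is just the restriction of $\dd$. Hence $\dim U_r^{p,q} = \dim H^{p,q}(E(X,r)/F_{r-1}E(X,r),\dd)\geq 0$. Summing over $r$ and invoking uniqueness of the binomial expansion identifies $a_i^{p,q} = \dim U_i^{p,q}$, giving both the formula and the positivity. The step I expect to require the most care is verifying that $H^{p,q}$ of a Schur-semisimple $\F$-module with a compatible differential remains Schur-semisimple of the same Schur-degree; this follows cleanly from the semisimplicity in \Cref{thm:gr_repr}(1) together with the fact that $\gr_F \dd$ preserves the filtration-degree grading.
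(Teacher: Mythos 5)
Your proposal is correct and follows essentially the same route as the paper's own proof: both rely on strictness from \Cref{thm:grad_cohomology} to reduce to the associated graded, both identify $\gr_F^r E^{p,q}(X)$ as built from $C_W$-modules so that each graded piece of cohomology contributes $\binom{n}{r}$ times an $r$-dependent constant, and both identify that constant by evaluating at $[r]$ where $\gr_F^r E(X)([r]) = E(X,r)/F_{r-1}E(X,r)$. The only stylistic difference is that you argue abstractly that ``Schur-semisimple'' $\F$-modules concentrated in Schur-degree $r$ are closed under subquotients (so cohomology is again of the form $C_{U_r^{p,q}}$), whereas the paper writes $\gr_F^r E(X,n)$ explicitly as $\Ind_{\SG_r\times\SG_{n-r}}^{\SG_n}$ of $E(X,r)/F_{r-1}E(X,r)$ and commutes induction with cohomology by exactness; these are two phrasings of the same fact.
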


\begin{proof}
We use \Cref{thm:grad_cohomology} and the fact that $\gr^i_F E^{p,q}(X)$ has not composition factors of type $D_k$ to obtain:
\begin{align*}
\dim H^{p,q}(E(X,n),\dd) &= \sum_{i\leq p+2q} \dim \gr^i_F H^{p,q}(E(X,n)) \\
&=\sum_{i\leq p+2q} \dim H^{p,q}(\gr^i_F E(X,n)) \\
&= \sum_{i\leq p+2q} \dim H^{p,q} \left( \Ind_{\SG_i \times \SG_{n-i}}^{\SG_n} \faktor{E(X,i)}{F_{i-1}E(X,i)} \right)\\
&= \sum_{i\leq p+2q} \dim \Ind_{\SG_i \times \SG_{n-i}}^{\SG_n}  H^{p,q}\left( \faktor{E(X,i)}{F_{i-1}E(X,i)} \right)\\
&= \sum_{i\leq p+2q} \binom{n}{i} \dim H^{p,q} \left( \faktor{E(X,i)}{F_{i-1}E(X,i)} \right)
\end{align*}
Since $\N$ is infinite we obtain the corresponding equality between polynomials.
\end{proof}

\section{The elliptic case}

Let $C$ be an elliptic curve, topologically $C=(S^1)^2$, the cohomology $H^\bigcdot(C;\Q)$ is the exterior algebra on two generator $x$ and $y$ such that $x \smile y = [C]$.
The construction of the previous section is compatible with the action of $\SL$.

We recall two results from \cite[Lemma 1.6, Theorem 3.9]{Pag}.
\begin{proposition}\label{prop:old}
The cohomology $H^k(\conf(C,n))$ vanishes for $k>n+1$ and so $a_i^{p,q}=0$ for $p+q>i+1$ or $p+2q<i$.
Moreover the mixed Hodge numbers for $q=0$ are given by:
\[P^{p,0}(t)=(p+1) \binom{t}{p} + (p-1) \binom{t}{p-1}.\]
\end{proposition}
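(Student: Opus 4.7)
The plan is to treat this as two external imports plus two short bookkeeping deductions using the machinery already in place. The cohomological vanishing $H^{>n+1}(\conf(C,n);\Q)=0$ and the explicit polynomial $P^{p,0}(t)$ are exactly the content of \cite[Lemma~1.6 and Theorem~3.9]{Pag}, so I would simply quote those. What remains is to derive the two vanishing statements for the coefficients $a_i^{p,q}$ from results already established in \Cref{sect:1}.

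For the first vanishing, namely $a_i^{p,q}=0$ whenever $p+q>i+1$, I would invoke \Cref{prop:coeff_binom}: it identifies $a_i^{p,q}$ with $\dim H^{p,q}(E(C,i)/F_{i-1}E(C,i),\dd)$, and \Cref{thm:grad_cohomology} rewrites this as $\dim \gr^i_F H^{p,q}(E(C,i),\dd)$. This is bounded above by $\dim H^{p+q}(\conf(C,i);\Q)$, which vanishes by the imported lemma once $p+q>i+1$.

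For the second vanishing, $a_i^{p,q}=0$ whenever $p+2q<i$, I would use instead the explicit description of $\gr^i_F E^{p,q}(C)$ in \Cref{thm:grad_cohomology}: the contributing summands are indexed by labelled partitions $\lambda\vdash i$ with $p(\lambda)=p$, $q(\lambda)=q$ and $f(\lambda)=0$. Writing $b$ for the number of blocks of such a $\lambda$ and $b_1$ for the number of size-one blocks, the condition $f(\lambda)=0$ forces every size-one block to carry a label of cohomological degree $\geq 1$, hence $p\geq b_1$, while a size count gives $i\geq b_1+2(b-b_1)=2b-b_1$. Combining these with $q=i-b$ yields $p+2q\geq b_1+2(i-b)\geq i$, so no such $\lambda$ exists when $p+2q<i$, the graded piece is already zero, and the cohomology vanishes a fortiori.

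The only genuinely nontrivial ingredient is the cohomological vanishing of \cite[Lemma~1.6]{Pag}, which I import as a black box. Beyond that, the hard part is merely to keep the definitions of the numerical invariants $p(\lambda)$, $q(\lambda)$ and $f(\lambda)$ straight in the counting argument above; no further obstacle is expected.
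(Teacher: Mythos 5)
The paper offers no proof of this proposition: it is explicitly framed as a recall of \cite[Lemma~1.6, Theorem~3.9]{Pag}, and the ``and so'' is left to the reader. Your proposal fills in precisely that implicit step, and the two arguments you give are correct. For the second vanishing ($p+2q<i$), your count is tight and matches the structural reason: the labelled partitions $\lambda\vdash i$ with $f(\lambda)=0$ satisfy $p(\lambda)+2q(\lambda)\geq i$, so $\gr_F^i E^{p,q}(C,i)$ is already zero.

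Two small remarks. First, there is a slicker route for the first vanishing that seems closer in spirit to the paper's ``and so'': by \Cref{prop:coeff_binom} the coefficients $a_i^{p,q}$ are nonnegative for $q>0$, and $P^{p,q}(n)=0$ for all $n\leq p+q-2$ because of the imported vanishing $H^{p+q}(\conf(C,n))=0$ when $n<p+q-1$; plugging in $n=0,1,\dots,p+q-2$ and using $\binom{n}{i}=0$ for $i>n$ then kills all $a_i^{p,q}$ with $i\leq p+q-2$, which is exactly $p+q>i+1$. Your route via $a_i^{p,q}=\dim\gr_F^iH^{p,q}(E(C,i))\leq\dim H^{p+q}(\conf(C,i))$ works equally well. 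Second, both your deductions quietly rely on \Cref{prop:coeff_binom} and \Cref{thm:grad_cohomology}, which are stated only for $q>0$; the case $q=0$ is not covered by that machinery but is handled directly by the explicit formula $P^{p,0}(t)=(p+1)\binom{t}{p}+(p-1)\binom{t}{p-1}$, which forces $a_i^{p,0}=0$ unless $i\in\{p-1,p\}$ and hence gives both inequalities at $q=0$. It is worth saying this explicitly, since otherwise the argument has a gap at $q=0$.
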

Indeed, an easy computation shows that 
\[H^{p,0}(\conf(C))= (\mathbb{P}_{1^p} \boxtimes \mathbb{V}_p) \oplus (\mathbb{P}_{1^{p-1}} \boxtimes \mathbb{V}_{p-2}),\]
as representation of $\F \times \SL$.

\begin{remark}
The multiplicity of the representation $C_{(k)}$ for $k>1$ in the graded cohomology $\gr_F^\bigcdot H^\bigcdot(\conf(C))$ can be deduced from \cite{PagUnorder}.
\end{remark}

For the sake of notation, we denote $\Ind_{\SG_n \times \SG_m}^{\SG_{n+m}} V \boxtimes W$ by $V \cdot W$.


\begin{lemma}
The following decomposition of $\SG_{p+2q}\times \SL$-representations holds:
\[\gr^{p+2q}_F E^{p,q}(C,p+2q) \cong \bigoplus_{a=0}^{\lfloor \frac{p}{2}\rfloor} \left( \Ind_{\SG_2 \wr \SG_q}^{\SG_{2n}} V_{(2)}^{\boxtimes q} \otimes V_{(1^q)} \right)  \cdot V_{(2^a,1^k)} \boxtimes \mathbb{V}_k, \]
where $p=2a+k$.
\end{lemma}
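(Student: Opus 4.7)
The plan is to apply \Cref{thm:grad_cohomology} to write the space as a sum of induced representations indexed by labelled partitions, classify which partitions actually appear, and then repackage the sum into $\SL$-isotypic pieces via Schur-Weyl duality. By that theorem, evaluated at $[p+2q]$, the space $\gr_F^{p+2q} E^{p,q}(C, p+2q)$ is $\bigoplus_\lambda \Ind_{Z(\lambda)}^{\SG_{p+2q}} \xi_\lambda$ over labelled partitions $\lambda \vdash p+2q$ satisfying $p(\lambda)=p$, $q(\lambda)=q$, $f(\lambda)=0$. Writing $n_{s,d}$ for the number of blocks of size $s$ carrying a label of degree $d$ (with $d\in\{0,1,2\}$ drawn from the basis $\{1,x,y,xy\}$), the three constraints combine into $\sum_{s,d}(s+d-2)\,n_{s,d}=0$. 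Since $s+d-2\geq 0$ in every case except $(s,d)=(1,0)$, and that one is ruled out by $f(\lambda)=0$, each nonzero $n_{s,d}$ must satisfy $s+d=2$. Hence $\lambda$ is forced to consist of exactly $q$ blocks of size $2$ labelled $1$ and $p$ blocks of size $1$ labelled $x$ or $y$.

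Next, I would parametrise such $\lambda$ by the integer $a\in\{0,\dots,p\}$ counting size-$1$ blocks labelled $x$ and compute $Z(\lambda)$ and $\xi_\lambda$ directly. One gets $Z(\lambda)\cong(\SG_2\wr\SG_q)\times\SG_a\times\SG_{p-a}$. For $\varphi_\lambda$, on each $C_2\subset L(\lambda)$ the restriction $\epsilon_n|_{C_2}$ agrees with the faithful character $\varphi_2$, so their tensor product is trivial and $\varphi_\lambda$ is trivial on all of $L(\lambda)$. For $\alpha_\lambda$, every exponent $m_j=\lambda_i+\deg(\lambda_i)+1$ equals $3$ (it is $2+0+1$ for $\SG_q$ and $1+1+1$ for $\SG_a,\SG_{p-a}$), so $\alpha_\lambda$ is the sign on each $\SG$-factor of $N(\lambda)$. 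Therefore $\xi_\lambda$ restricts to $V_{(2)}^{\boxtimes q}\otimes V_{(1^q)}$ on $\SG_2\wr\SG_q$ and to $V_{(1^a)}\boxtimes V_{(1^{p-a})}$ on $\SG_a\times\SG_{p-a}$.

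I would then use induction in stages through $\SG_{2q}\times\SG_p\subset\SG_{p+2q}$. The wreath factor $\Ind_{\SG_2\wr\SG_q}^{\SG_{2q}}(V_{(2)}^{\boxtimes q}\otimes V_{(1^q)})$ is independent of $a$, and the $a$-dependent piece is $\bigoplus_{a=0}^{p}\Ind_{\SG_a\times\SG_{p-a}}^{\SG_p}(V_{(1^a)}\boxtimes V_{(1^{p-a})})$. By Frobenius reciprocity this equals $V_{(1^p)}\otimes\bigoplus_{a=0}^{p}\Ind_{\SG_a\times\SG_{p-a}}^{\SG_p}(1_a\boxtimes 1_{p-a})$, and the inner sum is the permutation module on functions $[p]\to\{x,y\}$, canonically isomorphic to $(\mathbb{V}_1)^{\otimes p}$ as an $\SG_p\times\SL$-module (with $\SL$ acting on $\langle x,y\rangle$). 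Schur-Weyl duality decomposes this as $\bigoplus_{a=0}^{\lfloor p/2\rfloor}V_{(p-a,a)}\boxtimes\mathbb{V}_{p-2a}$, and tensoring with $V_{(1^p)}$ conjugates the $\SG_p$-partitions, producing $V_{(2^a,1^k)}$ with $k=p-2a$ and giving the claimed formula.

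The main obstacle I expect is tracking the $\SL$-equivariance carefully: \Cref{thm:grad_cohomology} is manifestly only $\F$-equivariant, and the $\SL$-action genuinely mixes the labelled partitions that differ in the choice of $x$ versus $y$ on size-$1$ blocks. The crux is recognising the sum over $a$ as the Schur-Weyl decomposition of $\langle x,y\rangle^{\otimes p}$ tensored with the sign character; once this identification is in place, the rest reduces to the combinatorial classification above and standard induction-in-stages manipulations.
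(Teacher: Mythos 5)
Your proof is correct, and it takes a genuinely different route at the crucial step. Both you and the paper start by applying \Cref{thm:grad_cohomology} to reduce to labelled partitions $(2^q,1^{p-b},1^b)$ with labels $1,x,y$, and both compute $Z(\lambda)$ and $\xi_\lambda$ explicitly (you spell this out more carefully, correctly finding $\varphi_\lambda$ trivial and $\alpha_\lambda$ equal to the sign on each symmetric factor). The divergence is in how the $\SL$-isotypic decomposition is extracted. The paper uses the weight-space description: the $b$-th addendum sits in $\SL$-weight $p-2b$, so the multiplicity of $\mathbb{V}_k$ is the virtual difference $V_{(1^{a+k})}\cdot V_{(1^a)} \ominus V_{(1^{a+k+1})}\cdot V_{(1^{a-1})}$, which is then identified with $V_{(2^a,1^k)}$ by a Littlewood--Richardson argument combined with a dimension count via the hook length formula. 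You instead recognise the full $a$-indexed sum (after peeling off the sign twist $V_{(1^p)}$ by the projection formula) as the permutation module $\langle x,y\rangle^{\otimes p}$ as an $\SG_p\times\SL$-bimodule, apply classical Schur--Weyl duality to decompose it as $\bigoplus_a V_{(p-a,a)}\boxtimes\mathbb{V}_{p-2a}$, and then conjugate partitions via the sign twist to land on $V_{(2^a,1^k)}$. Your route avoids the dimension count entirely and makes the $\SL$-equivariance structural rather than verified numerically; the paper's route is more elementary and stays entirely within character-theoretic manipulations of $\SG_n$-representations. Both are valid proofs of the lemma.
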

\begin{proof}
From \Cref{thm:grad_cohomology}, we have 
\[ \gr^{p+2q}_F E^{p,q}(C,p+2q) \cong \bigoplus_{b=0}^{p} \Bigl( \Ind_{C_2 \wr \SG_q}^{\SG_{2q}} V_{(2)}^{\boxtimes q} \otimes V_{(1^q)} \Bigr) \cdot V_{(1^{p-b})} \cdot V_{(1^{b})} \]
because the labelled partitions $\lambda\vdash p+2q$ with $p(\lambda)=p$, $q(\lambda)=q$ and $f(\lambda)=0$ are $(2^q,1^{p-b},1^b)$ for some $b$ with blocks labelled respectively by $1$, $x$, and $y$.
Moreover for such $\lambda$ we have $\varphi_\lambda = V_{(2)}^{\boxtimes q}$ and $\alpha_\lambda = V_{(1^q)} \boxtimes V_{(1^{p-b})} \boxtimes V_{(1^{b})}$.
By definition the maximal torus of $\SL$ acts with weight $p-2b$ on the $b$-th addendum, so 
\[ \gr^{p+2q}_F E^{p,q}(C,p+2q) \cong \bigoplus_{a=0}^{\lfloor \frac{p}{2}\rfloor} W \cdot \Bigl(V_{(1^{a+k})} \cdot V_{(1^{a})} \ominus V_{(1^{a+k+1})} \cdot V_{(1^{a-1})} \Bigr) \boxtimes \mathbb{V}_{k}, \]
where $k=p-2a$ and $W=\Ind_{C_2 \wr \SG_q}^{\SG_{2q}} V_{(2)}^{\boxtimes q} \otimes V_{(1^q)}$.
The representation $V_{(1^{a+k})} \cdot V_{(1^{a})} \ominus V_{(1^{a+k+1})}\cdot V_{(1^{a-1})}$ has dimension $\binom{p}{a}-\binom{p}{a-1}$.
Using the Littlewood-Richardson rule (see \cite{Fulton}), we observe that the representation $V_{(2^a,1^k)}$ appears in $V_{(1^{a+k})} \cdot V_{(1^{a})}$ but not in $V_{(1^{a+k+1})}\cdot V_{(1^{a-1})}$.
The hook formula shows that $\dim V_{(2^a,1^k)} = \frac{p!(k+1)}{a! a+k+1!} = \frac{k+1}{a+k+1}\binom{p}{a}$ and an easy computation show that 
\[\dim V_{(2^a,1^k)} = \dim \left( V_{(1^{a+k})} \cdot V_{(1^{a})} \ominus V_{(1^{a+k+1})}\cdot V_{(1^{a-1})} \right).\]
Since the first representation is contained in the second one, we complete the proof.
\end{proof}

\subsection*{Oyster partitions}
We need the Frobenius notation for partitions: for any sequences $a_1> a_2 > \dots  > a_k  > 0$ and $b_1> b_2 > \dots > b_k > 0$ let $(a_1, \dots, a_k \mid b_1, \dots, b_k)$ be the partition of $\sum_{i=1}^k (a_i+b_i -1)$ such that the $i$-th row has length $a_i+i-1$ and the $i$-th column has length $b_i+i-1$ for $i\leq k$
Let $Q(n)$ be the set of all the partitions of $n$ of the form $(a_1, \dots, a_k \mid a_1-1, \dots, a_k-1)$.
From \cite[Proposition 2.3.9 (a)]{Weyman}, \cite[Theorem A2.8]{Stanley}, or \cite[Appendix A eq.\ (6.2)]{Macdonald}, we have
\[ \Ind_{\SG_2 \wr \SG_q}^{\SG_{2n}} V_{(2)}^{\boxtimes q} \otimes V_{(1^q)} \cong \bigoplus_{\lambda \in Q(2q)} V_{\lambda}. \]

\begin{definition}
A $k$-\textit{core partition} of $2q+k$ is any partition of the form $(\lambda_1, \dots, \lambda_k)$ such that $(\lambda_1-1, \dots, \lambda_k-1)$ is in $Q(2q)$.
A $(k,a)$-\textit{shell partition} is any partition of the form $(b_1+3, \dots, b_a+3 \mid b_1, \dots, b_a)$ with $b_a>k$.
A $(k,a)$-\textit{oyster partition} is a partition $(c_1, \dots, c_{a+k} \mid d_1, \dots, d_{a+k})$ such that $(c_1, \dots, c_a \mid d_1, \dots, d_{a})$ is a $(k,a)$-shell partition and $(c_{a+1}, \dots, c_{a+k} \mid d_{a+1}, \dots, d_{a+k})$ is a $k$-core partition.
\end{definition}

\begin{example}
Let $k=2$ and consider the $2$-core partition $(4,4)$ obtained from $(3,2 \mid 2,1) \in Q(6)$ and the $(2,1)$-shell partition $(6,1,1)$.
The union of the shell and the core give the following oyster partition $\lambda=(6,5,5)$:
\[\ytableaushort
  {\none \none \none \none 1 1,\none \none \none \none 2, \none \none \none \none 3}
 * {5,5,5}
 * [*(yellow)]{6,1,1}
\]
The representation $V_\lambda \boxtimes \mathbb{V}_2$ is an addendum of 
\[V_{(4,4,4)}\cdot V_{(2,1,1)} \boxtimes \mathbb{V}_2 \subset \left(\Ind_{\SG_2 \wr \SG_6}^{\SG_{12}} V_{(2)}^{\boxtimes 6} \otimes V_{(1^6)} \right) \cdot V_{(2,1,1)} \boxtimes \mathbb{V}_2 \subset \gr^{16}_F E^{4,6}(C,16).\]
Indeed, $6=3+2+1$ and the partition $(4,4,4)$ is in Frobenius notation $(4,3,2 \mid 3,2,1)$.
The Littlewood-Richardson rule shows that the multiplicity of $V_\lambda$ in $V_{(4,4,4)}\cdot V_{(2,1,1)}$ is one and correspond to the skew semistandar Young tableaux of shape $(6,5,5)/ (4,4,4)$ of content $(2,1,1)$ shown above.
\end{example}

\begin{lemma}\label{lemma:oyster}
The module $\gr^{p+2q}_F H^{p,q}(\conf(C))$ contains
\[ \bigoplus_{a=0}^{\lfloor \frac{p}{2} \rfloor} \bigoplus_{\lambda \, (k,a)\textnormal{-oyster}} C_{V_\lambda} \boxtimes \mathbb{V}_k, \]
where $k=p-2a$ and the sum is taken over all $(k,a)$-oyster partitions of $p+2q$.
\end{lemma}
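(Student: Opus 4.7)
The starting point is to combine the previous lemma with the identity
\[\Ind_{\SG_2 \wr \SG_q}^{\SG_{2q}} V_{(2)}^{\boxtimes q} \otimes V_{(1^q)} \cong \bigoplus_{\mu \in Q(2q)} V_\mu\]
to rewrite
\[\gr^{p+2q}_F E^{p,q}(C, p+2q) \cong \bigoplus_{a=0}^{\lfloor p/2 \rfloor} \bigoplus_{\mu \in Q(2q)} \bigl(V_\mu \cdot V_{(2^a, 1^k)}\bigr) \boxtimes \mathbb{V}_k, \qquad k = p-2a.\]
Fixing $n = p + 2q$, these representations for varying $(p,q)$ with $p + 2q = n$ assemble into a cochain complex under $\gr_F \dd$ whose cohomology computes $\gr^n_F H^{\bigcdot,\bigcdot}(\conf(C,n))$ by \Cref{thm:grad_cohomology}.

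The plan is to show, for each $(k,a)$-oyster partition $\lambda \vdash n$, that the $V_\lambda \boxtimes \mathbb{V}_k$-isotypic is present in $\gr^n_F E^{p,q}(C,n)$ but absent from both adjacent terms $\gr^n_F E^{p-2, q+1}(C,n)$ and $\gr^n_F E^{p+2, q-1}(C,n)$. Schur's lemma then forces $\gr_F \dd$ to vanish both on and into this isotypic, so the whole $V_\lambda \boxtimes \mathbb{V}_k$-summand descends to cohomology; reinterpreting as an $\F$-module via \Cref{thm:grad_cohomology} yields the claimed $C_{V_\lambda} \boxtimes \mathbb{V}_k$. For the existence half, I would take $\mu \in Q(2q)$ whose Frobenius coordinates, after padding by an extra column of $k$ boxes, reproduce the core part of $\lambda$; a direct Littlewood-Richardson check then exhibits $V_\lambda$ inside $V_\mu \cdot V_{(2^a, 1^k)}$ by filling each shell row of $\lambda/\mu$ with one $1$ and one $2$ and filling the extra column by $3, 4, \ldots, k+2$. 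The condition $b_a > k$ from the shell definition is exactly what guarantees that these shell rows sit strictly to the right of the padded column, so the skew shape supports this unique admissible LR filling.

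For the rigidity half, I would analyse any hypothetical Littlewood-Richardson occurrence of $V_\lambda$ inside $V_\nu \cdot V_{(2^{a\mp1}, 1^k)}$ with $\nu \in Q(2q \pm 2)$. Relative to the $\mu$ above, such a $\nu$ must differ by the insertion or deletion of a single height-$2$ column; the condition $\nu \in Q$ forces this modification to preserve the characteristic diagonal pattern $c_i - d_i = 1$ of $Q$-partitions, while the prescribed LR content $(2^{a\mp1}, 1^k)$ forces the modification to meet precisely one of the shell rows of $\lambda$. These two requirements are incompatible because $b_a > k$ places the shell rows strictly outside the Frobenius diagonal region where a $Q$-compatible column swap would have to occur. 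The hard part will be carrying out this Frobenius-coordinate bookkeeping cleanly: there are several cases to rule out, according to whether the modified column touches the shell interior, the shell-core interface, or the padding column, but the inequality $b_a > k$ has been built into the definition precisely so that each case fails for the same reason.
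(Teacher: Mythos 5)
Your high-level plan matches the paper's: exhibit $V_\lambda\boxtimes\mathbb{V}_k$ inside $\gr^{p+2q}_F E^{p,q}(C,p+2q)$ and show it is absent from both $\gr^{p+2q}_F E^{p\pm 2,q\mp 1}(C,p+2q)$, so that Schur's lemma forces the summand to survive in cohomology. However, both halves of your execution have concrete problems. For existence, your candidate $\mu$ is not well-defined: the partition obtained from the core part of $\lambda$ by peeling off a column of $k$ boxes has size (core size) $- k$, which is in general much smaller than $2q$, so it does not live in $Q(2q)$ as required (e.g.\ in the paper's example $\lambda=(6,5,5)$, $p=4$, $q=6$, the core is $(4,4)$ of size $8$, and removing a column gives a size-$6$ partition, not a size-$12$ one). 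Moreover the filling you describe has content $(a,a,1^k)$ --- one $1$ and one $2$ per shell row contributes $a$ ones and $a$ twos --- rather than the required content $(2^a,1^k)$. The paper's $\mu$ is instead $\mu_i=\lambda_i-2$ for $i\leq a$, $\mu_i=\lambda_i-1$ for $a<i\leq a+k$, $\mu_i=\lambda_i$ for $i>a+k$, and one checks directly that $\mu\in Q(2q)$ and that the unique LR filling has content $(2^a,1^k)$.

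For nonexistence, the claim that a hypothetical $\nu\in Q(2q\pm 2)$ ``must differ from $\mu$ by the insertion or deletion of a single height-$2$ column'' is not correct: $\nu$ is an arbitrary element of $Q(2q\pm 2)$, a priori unrelated to your $\mu$, and the set of such $\nu$ contained in $\lambda$ is not controlled by a local column move. The actual argument needs no comparison with $\mu$ at all. The paper instead compares column lengths in Frobenius coordinates: since $\eta\in Q(2q+2)$ has $i$-th column of length $\eta_i-1$ while the $i$-th column of $\lambda$ (for $i\leq a$) has length $\lambda_i-3$, the containment $\eta\subseteq\lambda$ forces $\eta_i\leq\lambda_i-2$, hence at least $2a$ boxes of $\lambda/\eta$ lie in the first $a$ rows; but the reverse lattice word condition with content $(2^{a-1},1^k)$ allows at most $2a-1$ such boxes, a contradiction. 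A dual counting (now on rows) handles $Q(2q-2)$. Your appeal to $b_a>k$ does play a role, but only in guaranteeing that the shell rows sit strictly above the core in the Young diagram; it does not by itself resolve the case analysis you acknowledge leaving open. As written the proposal has a genuine gap in both directions and would need the Frobenius-coordinate counting carried out as above.
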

\begin{proof}
It is enough to prove that for every $(k,a)$-oyster partition $\lambda \vdash p+2q$ the representation $V_\lambda \boxtimes \mathbb{V}_k$ appears in $\gr^{p+2q}_F E^{p,q}(C,p+2q)$ but not in $\gr^{p+2q}_F E^{p+2,q-1}(C,p+2q)$ neither in $\gr^{p+2q}_F E^{p-2,q+1}(C,p+2q)$.

For $\lambda= (\lambda_1, \dots, \lambda_r)$ consider the partition $\mu = (\mu_1, \dots, \mu_r)$ such that $\mu_i= \lambda_i-2$ for $i\leq a$, $\mu_i= \lambda_i-1$ for $a<i\leq a+k$, and $\mu_i= \lambda_i$ for $a+k<i$.
By construction $\mu$ belongs to $Q(2q)$ and so $V_\mu$ appears in $V_{(1^q)}[V_{2}]$.
Applying the Littlewood-Richardson rule to $V_\mu \cdot V_{(2^a,1^k)}$, we obtain that $V_\lambda$ has multiplicity one in $V_\mu \cdot V_{(2^a,1^k)}$.
We have proven  $V_\lambda \boxtimes \mathbb{V}_k \subset \gr^{p+2q}_F E^{p,q}(C)$.

In order to show that $\gr^{p+2q}_F E^{p-2,q+1}(C,p+2q)$ does not contain $V_\lambda \boxtimes \mathbb{V}_k$ is equivalent to show that $V_\lambda$ is not contained in $V_{\eta} \cdot V_{(2^{a-1},1^k)}$ for all $\eta \in Q(2q+2)$.
Suppose by contradiction that there exist $\eta \in Q(2q+2)$ and a Littlewood-Richardson skew tableau of shape $\lambda / \eta$ and content $(2^{a-1},1^k)$.
The $i$-th column of $\lambda$ has length $\lambda_i-3$ for $i\leq a$ and the one of $\mu$ has length $\mu_i-1$ for all $i$.
Since $\mu$ is contained in $\lambda$ then $\mu_i \leq \lambda_i -2$ for all $i\leq a$.
In particular $\lambda / \eta$ has at least $2a$ boxes in the first $a$ rows.
From the reverse lattice word property of $\lambda / \eta$ the number $j$ cannot appear in the first $j-1$ rows, hence $\lambda / \eta$ has at most $2a-1$ boxes in the first $a$ rows.
We have obtained a contradiction.

Suppose that $V_\lambda$ is contained in $V_{\eta} \cdot V_{(2^{a+1},1^k)}$ for some $\eta \in Q(2q-2)$.
The constrictions of $\eta \in Q(2q-2)$ and the existence of a Littlewood-Richardson skew tableau of shape $\lambda / \eta$ and content $(2^{a+1},1^k)$ imply that $\eta_i=\lambda_i-2$ for all $i\leq a$.
The inequality $\eta_i\leq \lambda_i-2$ holds because columns of $\eta$ are shorter than the corresponding ones of $\lambda$, the other inequality $\eta_i\geq \lambda_i-2$ holds because the skew tableaux $\lambda / \eta$ has at most $2i$ elements in the first $i$-th rows for all $i$.
Therefore $\lambda / \eta$ has at most $a+k$ nonempty rows, but each Littlewood-Richardson skew tableau of content $(2^{a+1},1^k)$ must have at least $a+k+1$ nonempty rows.
This is contrary to the hypothesis that $V_\lambda$ is contained in $V_{\eta} \cdot V_{(2^a+1,1^k)}$ for some $\eta \in Q(2q-2)$.
\end{proof}

\begin{corollary}\label{cor:oyster}
We have the following lower bounds for the bi-graded Betti numbers:
\begin{align*}
\gr_F^{2q+2} H^{2,q} (E(C), \dd) & \supseteq C_{(q+3 \mid q)}  \boxtimes \mathbb{V}_0, \\
\gr_F^{8} H^{2,3} (E(C),\dd) &= C_{(6 \mid 3)}  \boxtimes \mathbb{V}_0 \oplus C_{(4,3 \mid 2,1)}  \boxtimes \mathbb{V}_2, \\
\gr_F^{6} H^{2,2} (E(C),\dd) &= C_{(5 \mid 2)}  \boxtimes \mathbb{V}_0 \oplus C_{(4,1 \mid 2,1)}  \boxtimes \mathbb{V}_2, \\
\gr_F^{4} H^{2,1}(E(C),\dd) &= C_{(4 \mid 1)}  \boxtimes \mathbb{V}_0 \oplus C_{(3 \mid 2)}  \boxtimes \mathbb{V}_2, \\
\gr_F^{3} H^{1,1}(E(C),\dd) &= C_{(3 \mid 1)} \boxtimes \mathbb{V}_1.
\end{align*}
In particular $\dim \gr_F^{2q+2} H^{2,q} (E(C,n), \dd)\geq \binom{2q+1}{q-1} \binom{n}{2q+2}$.
\end{corollary}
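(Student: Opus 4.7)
My plan is to derive all five statements as a direct application of \Cref{lemma:oyster}, enumerating the relevant oyster partitions case by case.

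For the first inclusion I would take $p=2$ and $a=1$, so $k=0$. A $(0,1)$-oyster partition of size $2q+2$ reduces to a single $(0,1)$-shell partition $(b_1+3\mid b_1)$ with $b_1>0$, and the size constraint $2b_1+2=2q+2$ forces $b_1=q$. This gives the unique contribution $C_{(q+3\mid q)}\boxtimes\mathbb{V}_0$ and \Cref{lemma:oyster} yields the stated inclusion. For the dimensional estimate I would use the hook length formula to compute $\dim V_{(q+3\mid q)}=\dim V_{(q+3,1^{q-1})}=\binom{2q+1}{q-1}$ and combine it with $\dim C_{V_\lambda}([n])=\binom{n}{|\lambda|}\dim V_\lambda$, the factor $\binom{n}{2q+2}$ then emerging from $|\lambda|=2q+2$.

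For the four equalities I would enumerate all $(k,a)$-oyster partitions in each case. When $p=2$ both values $a\in\{0,1\}$ are admissible: $a=1$ contributes a pure shell as above, while $a=0$ forces $k=2$ and the oyster is a pure $2$-core of size $p+2q$. In each of the three cases $q\in\{1,2,3\}$ a direct inspection of $Q(2q)$ identifies a unique such $2$-core, producing the claimed $\mathbb{V}_2$ summand. For $p=q=1$ only $a=0$, $k=1$ is possible, and the single $1$-core of $3$ is $(3\mid 1)$, yielding $C_{(3\mid 1)}\boxtimes\mathbb{V}_1$. Thus the $\supseteq$ direction of the equalities also follows from \Cref{lemma:oyster}.

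The main obstacle is the reverse inclusion. Since \Cref{lemma:oyster} is intrinsically a lower bound, to upgrade the four statements to equalities I would use the explicit decomposition of $\gr^{p+2q}_F E^{p,q}(C,p+2q)$ given in the preceding lemma and compute the kernel and image of $\gr_F\dd$ on each remaining summand via the Littlewood--Richardson rule. Because the cohomological degrees are small and only finitely many Schur pieces appear in each $\gr^{p+2q}_F E^{p,q}$, this is a tractable finite check: every non-oyster summand must be shown to lie in the image of $\gr_F\dd$ or to be sent outside the kernel. The same calculation also underlies the small-$n$ data in the appendix and can be cross-checked against it.
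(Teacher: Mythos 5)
Your plan agrees in all essentials with the paper's proof. The first inclusion and the dimensional estimate are handled exactly as the paper does: you correctly identify $(q+3\mid q)$ as the unique $(0,1)$-oyster partition of $2q+2$ (the paper's reference to a ``$(0,2)$-oyster'' here is a typo, since $a=1$, $k=0$) and apply the hook-length formula to get $\dim V_{(q+3,1^{q-1})}=\binom{2q+1}{q-1}$. Your enumeration of the oyster partitions giving the $\supseteq$ direction of the four equalities is also correct. The one place where your outline is imprecise is the reverse inclusions: the Littlewood--Richardson rule only decomposes $\gr^{p+2q}_F E^{\bigcdot,\bigcdot}(C,p+2q)$ into irreducibles; it does not, by itself, give the rank of $\gr_F\dd$ on each isotypic piece, so ``compute the kernel and image via LR'' leaves the actual mechanism unspecified. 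What the paper does instead is a pure dimension count: by \Cref{prop:coeff_binom} and \Cref{thm:grad_cohomology}, the dimension of $\gr^r_F H^{p,q}(E(C),\dd)$ evaluated at $[r]$ is precisely the top coefficient $a^{p,q}_r$, and these are read off from \Cref{table:e_3_prim,table:e_4_prim,table:e_6_prim} and eq.~\eqref{eq:dim_H23_8} (computed by the accompanying code); these numbers coincide with the dimensions of the oyster contributions, which forces equality. So what you describe as a ``cross-check against the small-$n$ data'' is not a cross-check but \emph{is} the argument; once you state it that way, your proof and the paper's coincide.
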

\begin{proof}

The first inclusion follows from \Cref{lemma:oyster} and the fact that $(q+3|q)$ is an $(0,2)$-oyster partition of $2q+2$ for $q>0$.
The following oyster partitions (with empty shells):
\[\ytableaushort
  {\none \none \none 1, \none \none \none 2}
 * {4,4}
 \qquad
 \ytableaushort
  {\none \none \none 1, \none 2}
 * {4,2}
  \qquad
 \ytableaushort
  {\none \none 1, 2}
 * {3,1}
   \qquad
 \ytableaushort
 {\none \none 1}
 * {3}
\]
imply that the right hand sides are contained in the left hand sides.
The other containment follows from a dimensional argument:
the dimensions of the left hand sides are computed in \Cref{table:e_3_prim,table:e_4_prim,table:e_6_prim} and eq.\ \eqref{eq:dim_H23_8} and coincide with the dimension of the representations on the right.
Finally, since $\dim V_{(q+3,1^{q-1})}= \binom{2q+1}{q-1}$, it follows that $\dim C_{V_{(q+3,1^{q-1})}}([n]) \boxtimes \mathbb{V}_0= \binom{2q+1}{q-1}\binom{n}{2q+2}$.
\end{proof}

%

\subsection*{Upper bounds for Betti numbers}

We denote by $E^{p,q}(C)_k$ the subspace in $ E^{p,q}(C)$ of highest vectors for $\SL$ of weight $k$ and similar for $\gr_F E^{p,q}(C)_k$ and for $H^{p,q}(E(C),\dd)_k$.
Therefore
\[  E^{p,q}(C) \cong \bigoplus_{a=0}^{\lfloor \frac{p}{2} \rfloor}  E^{p,q}(C)_{p-2a} \boxtimes \mathbb{V}_{p-2a}. \]
In order to give upper bounds for Betti numbers, we need the following result.
\begin{lemma}
\label{lemma:vanishing}
The following cohomology groups are zero:
\begin{align}
& H^{0,q}(E(C),\dd)=0 && q>0, \label{eq:zero_col_zero}\\
& \gr_F^{2q+1} H^{1,q}(E(C),\dd)=0 && q>1, \label{eq:zero_col_one}\\
& \gr_F^{2q} H^{1,q}(E(C),\dd)=0 && q>2, \label{eq:zero_col_one_up}\\
& \gr_F^{2q+2} H^{2,q}(E(C),\dd)_2=0 && q>3. \label{eq:zero_col_two}
\end{align}
\end{lemma}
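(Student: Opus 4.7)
The plan is to translate each vanishing into a statement about the graded complex via \Cref{thm:grad_cohomology}, which gives $\gr_F H^{p,q}(E(C),\dd) \cong H^{p,q}(\gr_F E(C), \gr_F \dd)$. The pieces $\gr_F^r E^{p,q}(C)$ have the explicit decomposition provided by the preceding lemma, and the vanishing reduces to analyzing the induced map $\gr_F \dd$ on representations of $\SG_r \times \SL$.

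For \eqref{eq:zero_col_zero}, since $\dd$ has bidegree $(2,-1)$ nothing maps into $E^{0,q}$, so $H^{0,q}(E(C),\dd)=\ker \dd|_{E^{0,q}}$; by strictness it suffices to show $\gr_F \dd$ is injective on $\gr_F^r E^{0,q}(C)$ for every $r \leq 2q$. At the top level $r=2q$, the source is $\bigoplus_{\mu \in Q(2q)} V_\mu$ and the weight-$0$ target is $\bigoplus_{\nu \in Q(2q-2)} V_\nu \cdot V_{(2)}$; by Pieri every $V_\mu$ with $\mu \in Q(2q)$ appears as a summand of some $V_\nu \cdot V_{(2)}$ (by removing a horizontal $2$-strip from $\mu$), and the induced component-wise map can be seen to be nonzero by evaluation on a test monomial $G_{i_1 j_1}\cdots G_{i_q j_q}$. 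Filtration levels $r<2q$, corresponding to labelled partitions with some block of size $>2$, admit a parallel argument.

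For \eqref{eq:zero_col_one} and \eqref{eq:zero_col_one_up}, the constraint $p=1$ forces $(a,k)=(0,1)$, so the source at top level $r=2q+1$ is $\bigl(\bigoplus_{\mu\in Q(2q)} V_\mu\bigr) \cdot V_{(1)} \boxtimes \mathbb{V}_1$, mapping to $\gr_F^{2q+1} E^{3,q-1}(C,2q+1)_1$. Expanding via Littlewood-Richardson, for $q>1$ every irreducible of the source appears in the target with positive multiplicity and the connecting map can be checked to be nonzero (the $q=1$ exception is precisely the contribution $C_{(3|1)} \boxtimes \mathbb{V}_1$ from \Cref{cor:oyster}). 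For \eqref{eq:zero_col_one_up} the same strategy applies at level $r=2q$, where the source arises from labelled partitions $(2^{q-1},1,1)$ with the two singletons labelled $x,y$; the threshold $q>2$ excludes the exceptional $q=2$ case, which is detected by an oyster-type construction analogous to \Cref{lemma:oyster}.

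Finally for \eqref{eq:zero_col_two}, the weight-$2$ part of $\gr_F^{2q+2} E^{2,q}(C,2q+2)$ is $\bigl(\bigoplus_{\mu\in Q(2q)} V_\mu\bigr) \cdot V_{(1,1)} \boxtimes \mathbb{V}_2$, which must inject into $\gr_F^{2q+2} E^{4,q-1}(C,2q+2)_2$. For $q=1,2,3$ nonzero weight-$2$ contributions $C_{(3|2)}$, $C_{(4,1|2,1)}$, and $C_{(4,3|2,1)}$ appear as in \Cref{cor:oyster}, so the hypothesis $q>3$ is essential. The main obstacle is precisely this last combinatorial verification: proving injectivity of $\gr_F \dd$ on the entire weight-$2$ summand for $q>3$ requires a careful Littlewood-Richardson case analysis to exclude these small-$q$ obstructions and confirm that no irreducible component of the source is sent to zero.
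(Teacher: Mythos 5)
Your proposal takes a genuinely different route from the paper, but it contains both an admitted incompleteness and a structural logical gap, so it does not constitute a proof.

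The paper's proof is quite different in nature. For \eqref{eq:zero_col_zero} it simply cites an external result ({\rm \cite[Proposition 1.2]{AAB14}}). For \eqref{eq:zero_col_one}--\eqref{eq:zero_col_two} it proceeds by induction on $q$: the base cases are verified by explicit computer calculation (the tables and the Python code), and the inductive step introduces a short filtration $G_\bigcdot$ of the graded complex $D_r=\bigoplus_q \gr^r_F E^{r-2q,q}(C)$, obtained by sorting monomials according to how the index $1$ occurs (e.g.\ $G_1 = \langle x_1\alpha, y_1\alpha\rangle$, the next layer adds the monomials containing $G_{1,i}$, etc.). The associated graded pieces are identified with direct sums of shifted copies of $\gr^{r-1}_F E(C)$ and $\gr^{r-2}_F E(C)$, so the long exact sequence in cohomology reduces the desired vanishing to the already-established statements \eqref{eq:zero_col_zero}--\eqref{eq:zero_col_one} at lower filtration level, plus the inductive hypothesis. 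This completely avoids any Littlewood--Richardson analysis.

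Your approach, by contrast, attempts to show directly that $\gr_F\dd$ is injective on the relevant graded pieces by matching irreducible constituents of source and target via Pieri/Littlewood--Richardson. This has two problems. First, even granting the combinatorial bookkeeping, the fact that every irreducible $V_\mu$ of the source appears with positive multiplicity in the target does \emph{not} imply that the equivariant map $\gr_F\dd$ is injective: each $V_\mu$-isotypic component of $\gr_F\dd$ must be shown to be a nonzero (indeed injective) matrix, and when the multiplicity in the target exceeds one this requires a genuine computation, not merely Schur's lemma. Your "evaluation on a test monomial" sketches nonvanishing on a single vector, which is strictly weaker than injectivity. Second, you explicitly stop short of the proof: for \eqref{eq:zero_col_two} you write that "the main obstacle is precisely this last combinatorial verification... requires a careful Littlewood--Richardson case analysis," which is an acknowledgment that the key step has not been carried out. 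The paper sidesteps all of this by trading the hard representation-theoretic injectivity question for a soft homological induction plus finitely many machine-verified base cases. If you want to pursue your route, you would need to actually exhibit the isotypic components of $\gr_F\dd$ and prove injectivity for each --- a significantly harder task than the paper's argument, and one that the paper's authors apparently chose not to attempt.
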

\begin{proof}
Equation \eqref{eq:zero_col_zero} is proven in \cite[Proposition 1.2]{AAB14}.

For eq.\ \eqref{eq:zero_col_one} we proceed by induction, the base case follows from the entry $(1,2)$ of \Cref{table:e_5_prim}.
Let $G_\bigcdot$ the filtration of the complex $D_r = \oplus_q \gr^r_{F} E^{r-2q,q}(C)$ for fixed $r$ defined by $G_0=0$, $G_2=D_r$ and
\[ G_1^q = \langle x_1 \alpha, y_1 \alpha \mid \alpha \in E^{r-1-2 q, q}(C,\{2, \dots, r\}) \rangle. \]
The complex $G_1$ is isomorphic to two copies of  $\gr_F^{r-1} E^{r-1-2q, q}(C)$ and the quotient complex $G_2/G_1$ is identified with $2q$ copies of $\gr_F^{r-2} E^{r-2q, q-1}(C)$ (one for each $G_{1i}$).
For $r=2q+1$ we have:
\begin{align*}
\dim \gr_F^{2q+1} &H^{1,q}(E(C),\dd) \leq \dim H^{q}(G_1) + \dim H^{q}(G_2/G_1) \\
&= 2\dim \gr_F^{2q} H^{0,q}(E(C),\dd) + 2q \dim \gr_F^{2q-1} H^{1,q-1}(E(C),\dd).
\end{align*}
The first addendum is zero by eq.\ \eqref{eq:zero_col_zero} and the second one by inductive step. 

For eq.\ \eqref{eq:zero_col_one_up} we proceed by induction, the base case follows from the entry $(1,3)$ of \Cref{table:e_6_prim}.
Consider the filtration $G'_\bigcdot$ of $D'_r = \oplus_q \gr^r_F E^{r-2q+1,q}(C)$ defined by $G'_0=0$, $G'_3=D'_r$,
\begin{align*}
G'_1 &= \langle x_1 \alpha, y_1 \alpha, x_1y_1 \beta \mid \alpha, \beta \textnormal{ w/o index } 1 \rangle, \\
G'_2 &= G_1' + \langle G_{1,i}\alpha, G_{1,i}x_1 \beta, G_{1,i}y_1 \beta, G_{1,i}x_1y_1 \gamma \mid \alpha, \beta,\gamma \textnormal{ w/o indices } 1,i  \rangle_{i=2,\dots,r}.
\end{align*}
For $r=2q$, we have 
\begin{equation}
\label{eq:G1}
H^{q}(G'_1) \cong \gr_F^{2q-1} H^{0,q}(E(C)) \oplus y_1 \gr_F^{2q-1} H^{0,q}(E(C))=0
\end{equation}
by eq.\ \eqref{eq:zero_col_zero}.
Similarly, $H^{q}\left( G'_2/G_1' \right)$ is equal to
\begin{equation}
\label{eq:G2}
\gr_F^{2q-2} H^{1,q-1}(E(C))^{\oplus 2q-1} \oplus \gr_F^{2q-2} H^{0,q-1}(E(C))^{\oplus 4q-2} =0
\end{equation}
by inductive step and by eq.\ \eqref{eq:zero_col_zero}.
The top cohomology vanishes:
\begin{equation}
\label{eq:G3}
H^{q}\left( G'_3/G_2' \right) \cong \bigoplus_{1<i< j} G_{1,i}G_{1,j} \gr_F^{2q-3} H^{1,q-2}(E(C)) =0
\end{equation}
because of eq.\ \eqref{eq:zero_col_one}.
Putting together eq.\ \eqref{eq:G1}, \eqref{eq:G2} and \eqref{eq:G3}, we obtain the claimed equality $\gr_F^{2q}H^{1,q}(E(C),\dd)=0$.

For eq.\ \eqref{eq:zero_col_two} we proceed by induction, the base case $\gr_F^{10} H^{2,4}(E(C),\dd)_2=0$ is computed with the Python code available at \url{https://www.dm.unibo.it/~roberto.pagaria/Top_graded_cohom_order_config_elliptic_curve.py}.
The computation involves only the $2$-weight space for $T\subset \SL$, i.e.\ the subspace of the graded module $\gr^r_F E^{r-2q,q}(C,r)$ generated by the monomials with $2$ more occurrences of $x$ than $y$.
Consider the filtration $G_\bigcdot''$ defined as the filtration $G_\bigcdot$ restricted to the subspace of weight $2$ for the torus action.
We need to prove for $r=2q+2$ that $H^q(G_1'')=0$ and $H^q(G_2''/G_1'')=0$.
The first equality follows from $H^q(G_1'')=\gr_F^{2q+1} H^{1,q}(E(C),\dd)_1=0$ by eq.\ \eqref{eq:zero_col_one}.
The complex $G_2''/G_1''$ is a direct sum of $2q+1$ copies of the complex $\gr^{2q}_F E(C)_2$ and so $H^q(G_2''/G_1'')\cong \gr_F^{2q+1} H^{2,q}(E(C),\dd)_2^{\oplus 2q+1}=0$ by inductive step.
\end{proof}

The below Corollary follows from the previous Theorem and the numerical computation exposes in the next section.
\begin{corollary}
\label{cor:Betti}
The Betti numbers of $\conf(C,n)$ are:
\begin{align*}
&b_0 =1, \\ 
&b_1=2n, \\
&b_2= 2\binom{n}{3}+ 3\binom{n}{2}+ n, \\
&b_3=  14 \binom{n}{4} + 8 \binom{n}{3} + 2 \binom{n}{2}, \\
&b_4 = 32 \binom{n}{6} + 74 \binom{n}{5}+ 32 \binom{n}{4} + 5 \binom{n}{3}, \\
&b_5 = 63 \binom{n}{8} + 427 \binom{n}{7} + 490 \binom{n}{6} + 154 \binom{n}{5}+18 \binom{n}{4}, \\
&b_k = c_k \binom{n}{2k-2} + o(n^{2k-2}),
\end{align*}
where $c_k \geq \binom{2k-3}{k-3}$.
\end{corollary}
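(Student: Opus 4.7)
The plan is to combine three ingredients already in hand: the polynomial decomposition of \Cref{prop:coeff_binom}, the vanishing results of \Cref{lemma:vanishing}, and the lower bounds of \Cref{cor:oyster}. By \Cref{prop:coeff_binom},
\[ b_k(n) = \sum_{p+q=k} P^{p,q}(n) = \sum_{p+q=k} \sum_{i} a^{p,q}_i \binom{n}{i}, \]
with nonnegative coefficients $a^{p,q}_i = \dim H^{p,q}\bigl(E(C,i)/F_{i-1}E(C,i)\bigr)$ for $q>0$; by \Cref{prop:old} these vanish outside $p+q-1\leq i\leq p+2q$. Everything therefore reduces to identifying the maximal $i$ with $a^{p,q}_i\neq 0$, counting what sits there, and, for $k\leq 5$, reading off the individual coefficients.

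For the explicit values $b_0,\dots,b_5$, I plan to assemble each $b_k$ one $(p,q)$-strip at a time. The $q=0$ strip is \Cref{prop:old}: $P^{p,0}(n)=(p+1)\binom{n}{p}+(p-1)\binom{n}{p-1}$. The $p=0$, $q>0$ strip disappears by \eqref{eq:zero_col_zero}. All remaining primitive numbers with $p,q>0$ and $i\leq 7$ are tabulated in the appendix. The only coefficient that falls outside the appendix tables is $a^{2,3}_8$, which the second line of \Cref{cor:oyster} identifies as $\dim V_{(6,1,1)} + \dim V_{(4,4)}\cdot\dim\mathbb{V}_2 = 21 + 14\cdot 3 = 63$, matching the leading term of $b_5$.

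For the general asymptotic I first bound $\deg b_k$ from above using \Cref{lemma:vanishing}. Under $p+q=k$ the top index $p+2q=2k-p$ is maximised at $p=0$ (value $2k$), then $p=1$ (value $2k-1$), then $p=2$ (value $2k-2$). Equation~\eqref{eq:zero_col_zero} kills the $p=0$ strip for $k>0$, and equation~\eqref{eq:zero_col_one} kills $a^{1,k-1}_{2k-1}$ as soon as $k\geq 3$. Hence, for $k\geq 3$, every $\binom{n}{i}$ with $i>2k-2$ contributes $0$ to $b_k$, so $\deg b_k\leq 2k-2$.

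For the matching lower bound I apply the first line of \Cref{cor:oyster} with $q=k-2$: the module $\gr^{2k-2}_F H^{2,k-2}(E(C),\dd)$ contains $C_{(k+1\mid k-2)}\boxtimes\mathbb{V}_0$. In the paper's Frobenius convention the partition $(k+1\mid k-2)$ is the hook $(k+1,1^{k-3})$ on $2k-2$ boxes, whose dimension is $\binom{2k-3}{k-3}$ by hook-length. This pushes $\binom{2k-3}{k-3}\binom{n}{2k-2}$ into $a^{2,k-2}_{2k-2}\binom{n}{2k-2}$; nonnegativity of all $a^{p,q}_i$ precludes any cancellation, yielding $c_k\geq\binom{2k-3}{k-3}$. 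The genuine obstacle is the vanishing~\eqref{eq:zero_col_one}, which constituted the bulk of \Cref{lemma:vanishing}; once that is granted, the asymptotic is a two-line combinatorial argument, and the small-$k$ formulas are pure bookkeeping against the primitive tables.
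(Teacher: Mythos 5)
Your plan is essentially the paper's proof in outline, and the general asymptotic argument is sound: \eqref{eq:zero_col_zero} and \eqref{eq:zero_col_one} cap the degree of $b_k$ at $2k-2$ for $k\geq 3$, Prop.~\ref{prop:coeff_binom} makes all $a_i^{p,q}$ nonnegative so no cancellation can occur, and the first line of Cor.~\ref{cor:oyster} with $q=k-2$ gives $a_{2k-2}^{2,k-2}\geq \dim V_{(k+1,1^{k-3})}=\binom{2k-3}{k-3}$. That chain correctly proves the last displayed line of the corollary.

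There are, however, two gaps in the ``pure bookkeeping'' you delegate to the tables. First, the appendix tables (\Cref{table:e_3_prim}--\Cref{table:e_7_prim}) record the coefficients $a_i^{p,q}$ for the $\F$-module $\gr_F^\bigcdot H^{p,q}(\conf(C,n)/C)$, \emph{not} for $\conf(C,n)$, while \Cref{prop:old} and the equations of \Cref{lemma:vanishing} are stated for $E(C)$. You cannot simply splice the $q=0$ strip for $\conf(C,n)$ onto tabulated $q>0$ data for the quotient. The paper reconciles the two by the ring isomorphism \eqref{eq:split_C}, i.e.\ the Poincar\'e polynomials of $\conf(C,n)$ and $\conf(C,n)/C$ differ by the factor $(1+t)^2$; without this convolution step your claimed formulas for $b_0,\dots,b_5$ are not justified. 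Second, you cite only \eqref{eq:zero_col_zero} and \eqref{eq:zero_col_one}, but to conclude that $a_8^{2,3}$ is the \emph{only} coefficient beyond the $n\leq 7$ tables entering $b_5$, one also needs $a_8^{1,4}=0$, which is \eqref{eq:zero_col_one_up} (and, for the stronger identification of $c_k$ with a weight-$0$ graded piece that the paper records, also \eqref{eq:zero_col_two}). Finally, as a matter of logical hygiene: the equality in the second line of Cor.~\ref{cor:oyster} that you invoke for $a^{2,3}_8=63$ is itself deduced in the paper from the Python computation \eqref{eq:dim_H23_8}; citing the corollary is fine, but you should be aware that the Lemma~\ref{lemma:oyster} containment alone would only give $a^{2,3}_8\geq 63$, not equality.
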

\begin{proof}
The case $b_i$ for $i \leq 5$ are computed from \Cref{table:e_3_prim,table:e_4_prim,table:e_5_prim,table:e_6_prim,table:e_7_prim}.
Recall that the Poincaré polynomial of $\conf(C,n)$ and of $\conf(C,n)/C$ differ by a factor $(1+t)^2$ and the case $q=0$ follows from \Cref{prop:old}.
The case $b_5$ need the vanishing results of \Cref{lemma:vanishing} and eq.\ \eqref{eq:dim_H23_8}.

For general $k$, we have
\[ b_k(n)= \sum_{p+q=k} \sum_{i=p+q-1}^{p+2q} \binom{n}{i} \dim H^{p,q} \left( \faktor{E(C,i)}{F_{i-1}E(C,i)} \right).\]
Eq.\ \eqref{eq:zero_col_zero} and \eqref{eq:zero_col_one} ensure that the polynomial has degree at most $2k-2$.
Eq.\ \eqref{eq:zero_col_one_up} and \eqref{eq:zero_col_two} implies that 
\[ b_k(n)= \binom{n}{2k-2} \dim H^{2,k-1} \left( \faktor{E(C,2k-2)}{F_{2k-3}E(C,2k-2)} \right)_0 + o(n^{2k-2}).\]
Finally, \Cref{cor:oyster} implies the desired result.
\end{proof}

\begin{conjecture}
We claim that
\[ b_k = \binom{2k-3}{k-3} \binom{n}{2k-2} + o(n^{2k-2}).\]
\end{conjecture}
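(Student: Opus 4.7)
My plan is to combine three ingredients: the binomial expansion given by \Cref{prop:coeff_binom}, the explicit tabulated dimensions for $n \leq 7$, and the vanishing results of \Cref{lemma:vanishing} that force the leading polynomial degree, with the oyster lower bound of \Cref{cor:oyster} supplying the leading coefficient. Concretely, since the Poincar\'e polynomial decomposes as
\[ b_k(n) = \sum_{p+q=k} P^{p,q}(n) = \sum_{p+q=k} \sum_{i} a_i^{p,q} \binom{n}{i}, \]
and by \Cref{prop:coeff_binom} each coefficient $a_i^{p,q}$ for $q>0$ equals $\dim H^{p,q}(E(C,i)/F_{i-1}E(C,i))$, it suffices to identify these graded cohomology dimensions and assemble them.

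For the explicit formulas $b_0,\dots,b_5$, I would proceed one $k$ at a time. The $q=0$ contribution is handled uniformly by \Cref{prop:old}, which gives the polynomial $(p+1)\binom{n}{p}+(p-1)\binom{n}{p-1}$. For $q>0$ I would read the values of $\dim H^{p,q}(E(C,i)/F_{i-1}E(C,i))$ from the tables \Cref{table:e_3_prim,table:e_4_prim,table:e_5_prim,table:e_6_prim,table:e_7_prim} in the appendix, for each $i$ in the range permitted by \Cref{prop:old} (namely $p+q-1 \leq i \leq p+2q$). A mild sanity check would be to verify that in the computed ranges the tables agree with $b_k(n)$ for small $n$. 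For $b_5$ specifically, where the relevant index range pushes into $i=2k-2=8$, the tables alone do not suffice; one needs the vanishing of $\gr_F^8 H^{1,4}(E(C),\dd)$ from \eqref{eq:zero_col_one_up} to kill potential spurious contributions, plus the identification of $\dim \gr_F^8 H^{2,3}(E(C),\dd)$ via \Cref{cor:oyster} applied at $(p,q)=(2,3)$.

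For the asymptotic statement $b_k = c_k \binom{n}{2k-2} + o(n^{2k-2})$, I would argue as follows. Since $a_i^{p,q}$ vanishes unless $p+q-1 \leq i \leq p+2q$, the degree of $b_k(n)$ as a polynomial in $n$ is at most $\max_{p+q=k}(p+2q) = 2k$, achieved only at $(p,q)=(0,k)$. But \eqref{eq:zero_col_zero} kills this term entirely, and \eqref{eq:zero_col_one} kills the next top term coming from $(p,q)=(1,k-1)$ at degree $2k-1$. So the effective degree drops to $2k-2$, with contributions potentially from $(0,k)$ at $i=2k-2$ (zero by \eqref{eq:zero_col_zero}), from $(1,k-1)$ at $i=2k-2$ (zero by \eqref{eq:zero_col_one_up}), and from $(2,k-2)$ at $i=2k-2$ restricted to the $\SL$-weight $0$ subspace (the weight $2$ subspace vanishes by \eqref{eq:zero_col_two}). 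So the leading coefficient $c_k$ equals $\dim H^{2,k-2}(E(C,2k-2)/F_{2k-3}E(C,2k-2))_0$, and \Cref{cor:oyster} applied to the oyster partition $(k \mid k-3)$ gives $c_k \geq \binom{2k-3}{k-3}$.

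The main obstacle, and the reason the matching upper bound remains a conjecture, is that the oyster construction only produces a lower bound for the leading coefficient. Upgrading $\geq$ to equality would require showing that no other $C_\lambda$-summands appear in the $\SL$-weight $0$ part of $\gr_F^{2k-2} H^{2,k-2}(E(C),\dd)$ beyond the one coming from $(k\mid k-3)$, which amounts to an exact calculation of the relevant piece of the graded Kri\v{z} cohomology for all $k$ simultaneously; the vanishing lemmas here handle only the neighboring rows $(0,\bigcdot)$, $(1,\bigcdot)$, and the weight $2$ part of $(2,\bigcdot)$, not the bulk of the $(2,\bigcdot)$ weight $0$ contribution.
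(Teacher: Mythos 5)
The statement you are asked to prove is stated in the paper as a \emph{Conjecture}; the paper supplies no proof of the exact equality, and what it does prove (Corollary~\ref{cor:Betti}) is only the one-sided bound $c_k \geq \binom{2k-3}{k-3}$ via \Cref{cor:oyster}. Your proposal is therefore not a proof of the stated equality, but you correctly recognize this: you give an accurate account of how the paper isolates the degree-$(2k-2)$ leading term (using \eqref{eq:zero_col_zero}, \eqref{eq:zero_col_one}, \eqref{eq:zero_col_one_up}, \eqref{eq:zero_col_two} to reduce to $\dim H^{2,k-2}(E(C,2k-2)/F_{2k-3}E(C,2k-2))_0$), and you correctly identify that the missing step is an upper bound on this dimension, i.e.\ ruling out additional $C_\lambda$-summands in the weight-$0$ part of $\gr_F^{2k-2} H^{2,k-2}(E(C),\dd)$. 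This is precisely the status of the conjecture.

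Two small corrections. First, the oyster partition of $2k-2$ produced by \Cref{cor:oyster} with $q=k-2$ is $(q+3 \mid q) = (k+1 \mid k-2)$ in the paper's Frobenius convention, i.e.\ the hook $(k+1, 1^{k-3})$, not $(k \mid k-3)$ as you wrote; your quoted dimension $\dim V_{(q+3,1^{q-1})} = \binom{2q+1}{q-1} = \binom{2k-3}{k-3}$ is nevertheless correct. Second, your identification $(p,q)=(2,k-2)$ is right; the display in the paper's proof of \Cref{cor:Betti} writes $H^{2,k-1}$, which is a typo for $H^{2,k-2}$ given the constraint $p+q=k$.
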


\begin{remark}
Let $S$ be a connected orientable surface of finite type, the stable range for $H^k(\conf(S,n);\Q)$ in the sense of \cite{CF13} is $n\geq 4k$ as proven in \cite[Theorem 1]{Church12}.
The above discussion implies that for $k>2$ the stable range in the elliptic case $S=C$ is $n\geq 4k-4$ and that this bound is optimal.
\end{remark}

\appendix
\section{Small cases}
\label{appendix}

The elliptic curve $C$ acts on $\conf(C,n)$ by $p \cdot (p_1, \dots, p_n)= (p_1+p, \dots, p_n+p)$ where $+$ is the group operation on the elliptic curve $C$.
This action is compatible with the structure of $\F$-module and the fibration 
\[ C \to \conf(C,n) \to \faktor{\conf(C,n)}{C}\]
has a non-canonical section $s \colon \faktor{\conf(C,n)}{C} \to \conf(C,n)$.
This induces an isomorphism 
\begin{equation}\label{eq:split_C}
H^\bigcdot (\conf(C,n)) \cong H^\bigcdot (C) \otimes H^\bigcdot (\conf(C,n)/C)
\end{equation}
as rings, but not as $\F$-modules because the section cannot be chosen in an equivariant way.
We used a Python3 to compute the cohomology of $\conf(C,n)/C$ for $n\leq 7$.
The code is available at \url{https://www.dm.unibo.it/~roberto.pagaria/Cohom_order_config_elliptic_curve.py}. and the bigraded Betti numbers are presented in \Cref{table:e_2,table:e_3,table:e_4,table:e_5,table:e_6,table:e_7}.

Since $\gr^r_F H^{p,q}(\conf(C))$ is semisimple for $q>0$, we have a decomposition analogous to eq.\ \eqref{eq:split_C}:
\[\gr_F^\bigcdot H^{\bigcdot,q} (\conf(C,n)) \cong \gr_F^\bigcdot H^\bigcdot (C) \otimes \gr_F^\bigcdot H^{\bigcdot,q} (\conf(C,n)/C) \]
\Cref{table:e_3_prim,table:e_4_prim,table:e_5_prim,table:e_6_prim,table:e_7_prim} report the numbers $a_i^{p,q}$ for $q>0$ associated to the $\F$-module $\gr_F^\bigcdot H^{p,q} (\conf(C,n)/C)$.
These entries are computed from the corresponding ones of \Cref{table:e_2,table:e_3,table:e_4,table:e_5,table:e_6,table:e_7} as convolution with binomial coefficients.

As an example we consider $\dim H^{2,2}(\conf(C,n)/C)$, the values in tables  \Cref{table:e_2,table:e_3,table:e_4,table:e_5,table:e_6,table:e_7} are $0,0,0,38,260,1022$ and coincides with the evaluation of the polynomial
\[ 38 \binom{n}{5} + 32 \binom{n}{6}\]
at $n=2,3,4,5,6,7$. The coefficient of this polynomial are reported in the corresponding entries $(2,2)$ of \Cref{table:e_3_prim,table:e_4_prim,table:e_5_prim,table:e_6_prim,table:e_7_prim}.
(Notice that in those tables the $0$-th row is omitted.)

The values of \Cref{table:e_2,table:e_3} corresponding to the cases $n=2,3$ were previously computed in \cite{Azam}.

\begin{table}
\centering
\begin{tabular}{cc}
$0$ & \\
$1$ & $2$ 
\end{tabular}
\caption{The dimension of the cohomology $H^{p,q}(\conf(C,2)/C)$.}
\label{table:e_2}
\end{table}

\begin{table}
\centering
\begin{tabular}{ccc}
$0$ & &  \\
$0$ & $2$ &  \\
$1$ & $4$ & $3$ 
\end{tabular}
\caption{The dimension of the cohomology $H^{p,q}(\conf(C,3)/C)$.}
\label{table:e_3}
\end{table}

\begin{table}
\centering
\begin{tabular}{cccc}
$0$ & & & \\
$0$ & $4$ & &  \\
$0$ & $8$ & $10$ &  \\
$1$ & $6$ & $9$ & $4$ 
\end{tabular}
\caption{The dimension of the cohomology $H^{p,q}(\conf(C,4)/C)$.}
\label{table:e_4}
\end{table}

\begin{table}
\centering
\begin{tabular}{ccccc}
$0$ & & & & \\
$0$ & $12$ & & & \\
$0$ & $20$ & $38$ & &  \\
$0$ & $20$ & $50$ & $24$ &  \\
$1$ & $8$ & $18$ & $16$ & $5$ 
\end{tabular}
\caption{The dimension of the cohomology $H^{p,q}(\conf(C,5)/C)$.}
\label{table:e_5}
\end{table}

\begin{table}
\centering
\begin{tabular}{cccccc}
$0$ & & & & & \\
$0$ & $48$ & & & & \\
$0$ & $72$ & $176$ & & & \\
$0$ & $60$ & $260$ & $152$ & & \\
$0$ & $40$ & $150$ & $144$ & $50$ & \\
$1$ & $10$ & $30$ & $40$ & $25$ & $6$ 
\end{tabular}
\caption{The dimension of the cohomology $H^{p,q}(\conf(C,6)/C)$.}
\label{table:e_6}
\end{table}

\begin{table}
\centering
\begin{tabular}{ccccccc}
$0$ & & & & & & \\
$0$ & $240$ & & & & & \\
$0$ & $336$ & $976$ & & & & \\
$0$ & $252$ & $1491$ & $1040$ & & & \\
$0$ & $140$ & $1022$ & $1232$ & $425$ & & \\
$0$ & $70$ & $350$ & $504$ & $350$ & $90$ & \\
$1$ & $12$ & $45$ & $80$ & $75$ & $36$ & $7$
\end{tabular}
\caption{The dimension of the cohomology $H^{p,q}(\conf(C,7)/C)$.}
\label{table:e_7}
\end{table}

\begin{table}
\centering
\begin{tabular}{ccc}
$0$ & \\
$0$ & $2$ 
\end{tabular}
\caption{
The coefficients $a_3^{p,q}$ of the $\F$-module $H^{p,q}(\conf(C)/C)$ ($q>0$).}
\label{table:e_3_prim}
\end{table}

\begin{table}
\centering
\begin{tabular}{cccc}
$0$ & & \\
$0$ & $4$ & \\
$0$ & $0$ & $10$ 
\end{tabular}
\caption{The coefficients $a_4^{p,q}$ of the $\F$-module $H^{p,q}(\conf(C)/C)$ ($q>0$).}
\label{table:e_4_prim}
\end{table}

\begin{table}
\centering
\begin{tabular}{ccccc}
$0$ & & &  \\
$0$ & $12$ & &  \\
$0$ & $0$ & $38$ &   \\
$0$ & $0$ & $0$ & $24$ 
\end{tabular}
\caption{The coefficients $a_5^{p,q}$ of the $\F$-module $H^{p,q}(\conf(C)/C)$ ($q>0$).}
\label{table:e_5_prim}
\end{table}

\begin{table}
\centering
\begin{tabular}{cccccc}
$0$ & & & & \\
$0$ & $48$ & & & \\
$0$ & $0$ & $176$ & & \\
$0$ & $0$ & $32$ & $152$ & \\
$0$ & $0$ & $0$ & $0$ & $50$ 
\end{tabular}
\caption{The coefficients $a_6^{p,q}$ of the $\F$-module $H^{p,q}(\conf(C)/C)$ ($q>0$).}
\label{table:e_6_prim}
\end{table}

\begin{table}
\centering
\begin{tabular}{ccccccc}
$0$ & & & & & \\
$0$ & $240$ & & & & \\
$0$ & $0$ & $976$ & & & \\
$0$ & $0$ & $259$ & $1040$ & & \\
$0$ & $0$ & $0$ & $168$ & $425$ & \\
$0$ & $0$ & $0$ & $0$ & $0$ & $90$ 
\end{tabular}
\caption{The coefficients $a_7^{p,q}$ of the $\F$-module $H^{p,q}(\conf(C)/C)$ ($q>0$).}
\label{table:e_7_prim}
\end{table}

Moreover, using the same code we have:
\begin{equation}\label{eq:dim_H23_8}
H^{2,3}(\conf(C,8)/C;\Q) = 7063,
\end{equation}
and so $a^{2,3}_8=63$.

\subsection*{Acknowledgement}
I would thank Gian Marco Pezzoli for the useful discussions and John Wiltshire-Gordon for notifying me the reference \cite{Ryba}.

\bibliographystyle{amsalpha}
\bibliography{orderedconf}

\end{document}